\title{Phase transition for tree-rooted maps} 
\author{Marie Albenque}{IRIF, Université Paris Cité, France}{malbenque@irif.fr}{}{}
\author{Éric Fusy}{Univ Gustave Eiffel, CNRS, LIGM, F-77454 Marne-la-Vallée, France}{eric.fusy@univ-eiffel.fr}{}{}
\author{Zéphyr Salvy}{Univ Gustave Eiffel, CNRS, LIGM, F-77454 Marne-la-Vallée, France}{zephyr.salvy@univ-eiffel.fr}{}{}
\authorrunning{M. Albenque, É. Fusy and Z. Salvy} 
\keywords{Asymptotic Enumeration, Planar maps, Random trees, Phase transition} 
\crefname{equation}{}{Equations} 
\Crefname{equation}{Equation}{Equations}
\renewcommand{\(}{\left(}
\renewcommand{\)}{\right)}
\newcommand{\N}{\mathbb{Z}_{\geq0}}
\newcommand{\C}{\mathbb{C}}
\newcommand{\m}{\mathfrak{m}}
\renewcommand{\b}{\mathfrak{b}}
\newcommand{\co}{\mathfrak{c}}
\renewcommand{\t}{\mathfrak{t}}
\newcommand{\M}{\mathrm{M}}
\newcommand{\B}{\mathrm{B}}
\newcommand{\LB}{\mathrm{LB}}
\newcommand{\cM}{\mathcal{M}}
\newcommand{\cB}{\mathcal{B}}
\newcommand{\E}[1]{\mathbb{E}\left[ #1 \right]}
\newcommand{\pr}[2][]{\mathbb{P}#1\( #2\)}
\newcommand{\prnu}[1]{\pr[_{n}^{(u)}]{#1}}
\def\bA{\overline{A}}
\def\bS{\overline{S}}
\def\bP{\overline{P}}
\def\Hu{H^{(u)}}
\def\Bu{B^{(u)}}
\def\Psiu{\Psi^{(u)}}
\def\Bn{\mathrm{B}_n}
\def\Mn{\mathrm{M}_n}
\def\Mnu{\mathrm{M}_n^{(u)}}
\def\Tnu{\mathrm{T}_n^{(u)}}
\def\diam{\mathrm{Diam}}
\newcommand{\tree}{\mathrm{T}} 
\newcommand{\STnu}{\tau(\Mnu)} 
\newcommand{\Pnu}{\mathbb{P}_n^{(u)}} 
\def\Cat{\mathrm{Cat}}
\begin{document}

\maketitle


\begin{abstract}
We introduce a model of tree-rooted planar maps weighted by their number of $2$-connected blocks. We study its enumerative properties and prove that it undergoes a phase transition. We give the distribution of the size of the largest $2$-connected blocks in the three regimes (subcritical, critical and supercritical) and further establish that the scaling limit is the Brownian Continuum Random Tree in the critical and supercritical regimes, with respective rescalings $\sqrt{n/\log(n)}$ and $\sqrt{n}$.
\end{abstract}

\section{Introduction}

A \emph{planar map} $\m$ is the proper embedding into the two-dimensional sphere of a connected planar finite multigraph, considered up to homeomorphisms. In recent years, models of random planar maps with weighted $2$-connected blocks~\cite{bonzom2016large,StuflerSurvey} have been introduced. In particular, the model with a Boltzmann weight $u$ per block exhibits a phase transition at $u_C=9/5$, with a ``tree phase'' for $u>u_C$ with only small blocks and having the Brownian Continuum Random Tree (CRT) as scaling limit~\cite{StuflerSurvey}, a ``map-phase'' for $u<u_C$ characterized by the existence of a giant block and having the Brownian sphere as scaling limit, and with mesoscopic blocks and the stable tree of parameter $3/2$ as scaling limit at the critical point $u_C$~\cite{fleurat2023phase}. 

Here, we study such a model in the context of decorated maps and consider the emblematic case of \emph{tree-rooted maps}, \emph{i.e}., maps endowed with a spanning tree. In theoretical physics, decorated maps are instrumental to provide models of two-dimensional quantum gravity coupled with matter. They lead to new asymptotic behaviours, and the study of scaling limits in that context is currently a very challenging topic in random maps~\cite{gwynne2020mating}. Among decorated maps, tree-rooted maps have very rich combinatorial properties and their enumeration goes back to Mullin~\cite{Mullin67}, who obtained the formula
\begin{equation}
m_n=\sum_{k=0}^{n}\binom{2n}{2k}\Cat_k\Cat_{n-k}=\Cat_n\Cat_{n+1}
\end{equation}
for the number of tree-rooted maps with $n$ edges, 
by observing that a tree-rooted map is a shuffle of two plane trees (the spanning tree and its dual). A direct bijective proof that $m_n=\Cat_n\Cat_{n+1}$ was later obtained by Bernardi~\cite{bernardi06}, who subsequently extended his bijection to maps endowed with a Potts model~\cite{bernardi08}.

Our contributions here are both enumerative and probabilistic. First, we show an asymptotic estimate for $2$-connected tree-rooted maps, and that the enumeration of tree-rooted maps with a weight $u$ per $2$-connected block undergoes a phase transition at an explicit (transcendental) value $u_C$. On the probabilistic side, we obtain limit laws for the sizes of the largest blocks, with the existence of a giant block if and only if $u<u_C$. Furthermore, we show that the scaling limit is the CRT for all $u\geq u_C$, with a discontinuity at $u_C$ for the order of magnitude of the rescaling, which is $\sqrt{n/\log n}$ at $u_C$ whereas it is $\sqrt{n}$ for $u>u_C$ (the scaling limit result for $u>u_C$ also follows from~\cite[Th 6.63]{StuflerSurvey}, as further commented in Section~\ref{sec:scalingLimit}). 
Finally, we discuss possible extensions in the concluding section.

\section{Block-decomposition of tree-rooted maps}
Let $\m$ be a planar map. We denote, respectively, by $E(\m)$, $V(\m)$ and $F(\m)$ its sets of edges, vertices and faces.
Any edge is made of two half-edges (which meet at the middle of the edge).
All the maps considered in this paper will be \emph{rooted}, meaning that one of their half-edges is distinguished (and is represented by an oriented edge on figures), and a rooted planar map will be simply called a \emph{map} from now on.
The \emph{size} of a map $\m$~--~denoted by $|\m|$~--~is defined as its number of edges.

A map $\m$ is said to be \emph{separable} if $E(\m)$ can be partitioned into two non-empty subsets $E_1$ and $E_2$ such that there exists exactly one vertex~--~called \emph{cut vertex}~--~incident to both an element of $E_1$ and an element of $E_2$. It is is said to be \emph{$2$-connected} otherwise. By convention, the vertex map (\emph{i.e.}, the map reduced to a single vertex) is considered to be $2$-connected.
A \emph{block} of $\m$ is a maximal $2$-connected submap \emph{of positive size}. The number of blocks of $\m$ is denoted by $b(\m)$, so that if $|\m| > 0$, then $b(\m)=1$ if and only if $\m$ is $2$-connected.

Fix $\m$ a map, and let $\tau$ be one of its spanning trees, then one calls $(\m,\tau)$ a tree-rooted map. We denote, respectively, by $\cM$ and $\cB$ the set of tree-rooted maps and of $2$-connected tree-rooted maps, by $\cM_n$ and $\cB_n$ the subset of $\cM$ and $\cB$ restricted to elements of size $n$, and by $M(z)$ and $B(y)$ the associated generating series. In the following, a tree-rooted map will be denoted by $\m$ instead of being explicitly written as a pair, and, for $\m\in \cM$, we write $\tau(\m)$ for its distinguished spanning tree.
\medskip

To enumerate $2$-connected maps, Tutte~\cite{tutte_1963} formulated a decomposition to relate the generating series of maps and of $2$-connected maps, as follows.
Fix $\m$ a map and let $\b$ be the block containing its root. For each half-edge $e$ of $\b$ incident to a vertex $u$, let $c$ be the corner of $\b$ incident to $u$ and following $e$ in counterclockwise order around $u$. The \emph{pendant submap} $\m_{e}$ of $e$ is defined as the maximal submap of $\m$ disjoint from $\b$ except at $u$, and located in the area of $c$. Unless $\m_e$ is reduced to the vertex map, its root is defined at the  half-edge following $e$ in counterclockwise order around $u$ in $\m$. From $\b$ and the collection of pendant submaps $\{\m_{e}\}$, we can bijectively reconstruct $\m$.

This decomposition extends readily to tree-rooted maps as follows. Fix $\m\in \cM$. Consider $(\b,\tau_\b)$, where $\b$ is, as before, the block of $\m$ containing its root and $\tau_\b=\tau(\m)\cap \b$. We claim that $\tau_\b$ is a spanning tree of $\b$: clearly, $\tau_\b$ is acyclic since $\tau(\m)$ is. Then, for any $u,v\in \b$, since $\b$ is $2$-connected, any path between $u$ and $v$ that is not included in $\b$ has to visit the same cut vertex at least twice, and in particular is not simple. Any simple path between $u$ and $v$ in $\m$ is then included in $\b$, and so is the unique simple path between $u$ and $v$ in $\tau(\m)$. This proves that $\tau_\b$ is connected. The same reasoning can be applied to all the pendant submaps $\m_e$, to get a similar decomposition in the tree-rooted case, which induces the following identity of generating series.
\begin{proposition}
The generating series satisfy the following equality:
\begin{equation}\label{eqn-cartes-boisees}
M(z) = B\(zM(z)^2\).
\end{equation}
Moreover, this equation can be refined to account for the number of blocks in a tree-rooted map. Writing $M(z,u) = \sum_{\m\in \cM}z^{|\m|}u^{b(\m)}$, one has:
\begin{equation}\label{eq:MB}
M(z,u)=uB(zM(z,u)^2) +1-u.
\end{equation}
\end{proposition}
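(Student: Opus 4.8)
The plan is to translate the extended block decomposition recalled just above directly into identities of generating series. For a tree-rooted map $\m$ of positive size, that decomposition sets up a bijection between $\m$ and the pair formed by its root block $\b$ --- which, together with $\tau_\b=\tau(\m)\cap\b$, is an element of $\cB$ by the argument already given --- and the family of pendant tree-rooted maps $(\m_e)_e$ indexed by the half-edges $e$ of $\b$, each $\m_e$ being (with its induced spanning tree) an arbitrary element of $\cM$, possibly the vertex map. The reverse map glues each $\m_e$ into the corner of $\b$ following $e$; that this is well defined and inverse to the decomposition is precisely the spanning-tree compatibility checked before the statement.

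First I would read off \eqref{eqn-cartes-boisees}. A map with $|\b|=k$ edges has exactly $2k$ half-edges, so the decomposition attaches $2k$ pendant maps, and the size is additive, $|\m|=|\b|+\sum_e|\m_e|$. At the level of generating series a fixed root block $\b$ of size $k$ hence contributes $z^{k}\prod_e M(z)=z^{k}M(z)^{2k}=\(zM(z)^2\)^{k}$, the $k$ factors $z$ marking the edges of $\b$ and the $2k$ independent factors $M(z)$ accounting for the pendant maps. Summing over $\b\in\cB$ gives $M(z)=\sum_{\b\in\cB}\(zM(z)^2\)^{|\b|}=B\(zM(z)^2\)$, the vertex map of size $0$ supplying the constant term $1$ on both sides.

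Next I would refine by the number of blocks, which is the genuinely new step. For $|\m|>0$ the blocks of $\m$ are exactly $\b$ together with all the blocks carried by the pendant maps, so $b(\m)=1+\sum_e b(\m_e)$; the extra $+1$ is what produces a marking factor $u$ for the root block. Thus $z^{|\m|}u^{b(\m)}=u\,z^{|\b|}\prod_e z^{|\m_e|}u^{b(\m_e)}$, and summing over all positive-size maps yields $\sum_{|\m|>0}z^{|\m|}u^{b(\m)}=u\(B(zM(z,u)^2)-1\)$, where the $-1$ removes the size-$0$ term of $B$. Adding the vertex map, which carries no block and so contributes $z^{0}u^{0}=1$, gives $M(z,u)=1+u\(B(zM(z,u)^2)-1\)=uB(zM(z,u)^2)+1-u$, and putting $u=1$ recovers \eqref{eqn-cartes-boisees} as a check.

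Everything beyond this is routine, so the main care lies in the bookkeeping rather than in any new idea. The delicate points are the block recursion $b(\m)=1+\sum_e b(\m_e)$ --- one must be sure the pendant maps contribute no block beyond those inside them and that $\b$ itself is counted exactly once --- the count of precisely $2|\b|$ pendant slots, and the consistent treatment of the degenerate vertex map both as a possible root block and as a possible pendant, so that the constant terms match on each side. The spanning-tree content (that $\tau_\b$ and the induced trees on the $\m_e$ are spanning trees, and that any such choice reassembles into a spanning tree of $\m$) is exactly what was verified above, so it can be invoked rather than reproved.
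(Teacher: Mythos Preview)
Your proof is correct and follows exactly the approach of the paper: the text preceding the proposition already sets up the bijection (root block plus pendant tree-rooted submaps indexed by the $2|\b|$ half-edges), and you have carefully carried out the routine translation into generating series, including the bookkeeping $b(\m)=1+\sum_e b(\m_e)$ for $|\m|>0$ and the separate treatment of the vertex map. There is nothing to add or to fix.
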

 Note that these relations are exactly the same as the ones obtained in the non--tree-rooted case~\cite{tutte_1963,fleurat2023phase}.

\begin{figure}
\centering
\includegraphics[height=0.2\textheight]{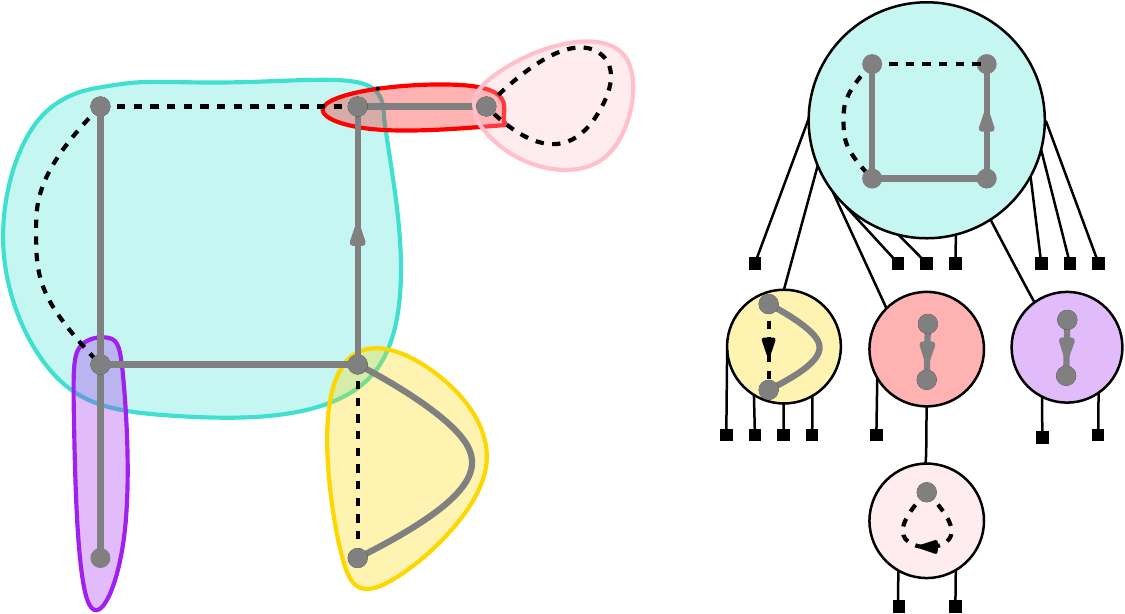}
\caption{Block tree corresponding to a tree-rooted planar map. Full grey (resp. dashed black) edges represent edges that are part (resp. not part) of the decorating spanning tree.}
\label{fig:block-tree-boisees}
\end{figure}

Tutte's decomposition can also be applied recursively, by considering first the root block and then applying the block decomposition to each of the pendant submaps. This can be encoded by a decomposition tree $T_{\m}$, which was first explicitly described by Addario-Berry in the non--tree-rooted case \cite[\S2]{2Louigi}, but which can also be extended to the tree-rooted case; see \cref{fig:block-tree-boisees}.
\begin{proposition}
\label{prop-t-m-n-boisee}
The block tree $T_\m$ of a tree-rooted map $\m$ satisfies the following properties:
\begin{itemize}
\item Edges of $T_\m$ correspond to half-edges of $\m$;
\item Internal nodes of $T_\m$ correspond to blocks of $\m$: if an internal node $v$ of $T_\m$ has $r$ children, then the corresponding block $\b_v$ of $\m$ has size $r/2$;
\item The map $\m$ is entirely determined by $\Big(T_\m, (\b_v, v\in T_\m)\Big)$ where $\b_v$ is the block of $\m$ represented by $v$ in $T_\m$ if $v$ is an internal node and is the vertex map otherwise.
 \end{itemize}
\end{proposition}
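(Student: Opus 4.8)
The plan is to argue by induction on the size $|\m|$, using the recursive definition of the block tree. Concretely, $T_\m$ is built by placing at its root the internal node carrying the root block $\b$ (decorated by the induced spanning tree $\tau(\m)\cap\b$, which is a genuine spanning tree of $\b$ by the argument already given), and by attaching as its children, in the canonical order of the half-edges of $\b$ around the root, the block trees $T_{\m_e}$ of the pendant submaps $\m_e$, one for each half-edge $e$ of $\b$. The base case is the vertex map, for which $T_\m$ is a single leaf and the three assertions hold vacuously.

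For the correspondence between internal nodes and blocks, I would first observe that the edges of $\m$ partition into the edges of $\b$ and the edges of the pendant submaps, each $\m_e$ being attached to $\b$ at a single vertex. It follows that any $2$-connected submap of positive size sits inside $\b$ or inside a single $\m_e$, so that the blocks of $\m$ are exactly $\b$ together with the blocks of the various $\m_e$. Combined with the induction hypothesis applied to each $\m_e$, this yields the bijection between internal nodes of $T_\m$ and blocks of $\m$. The size relation is immediate at the root, which has exactly one child per half-edge of $\b$, hence $r=2|\b|$ children and $|\b|=r/2$; for the remaining internal nodes it is supplied by induction.

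The edge/half-edge correspondence is obtained in the same way. The edges of $T_\m$ incident to the root are in bijection with the half-edges of $\b$, while by induction the edges of $T_{\m_e}$ match the half-edges of $\m_e$. Since the half-edges of $\m$ split into those of $\b$ and those of the $\m_e$'s, these bijections assemble into the desired one; equivalently, each edge of $T_\m$ joins an internal node $v$ to a child and is identified with the corresponding half-edge of $\b_v$.

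For the reconstruction, I would reverse Tutte's decomposition recursively: reading the root block off the root node and matching its half-edges with the children of the root through the canonical ordering, one rebuilds each pendant submap from its subtree (by induction) and regrafts it along the prescribed half-edge, thereby recovering $\m$ as a map. The step I expect to require the most care, and hence the main obstacle, is to check that this also recovers the \emph{decoration}: one must verify that the union of the block spanning trees $\tau(\m)\cap\b_v$ is a spanning tree of $\m$. This relies on the blocks meeting only at cut vertices and being organised in a cycle-free, tree-like fashion, so that gluing the individual block spanning trees produces a connected and acyclic spanning subgraph, necessarily equal to $\tau(\m)$; making this union argument precise, together with checking that the matching between ordered children and canonically ordered half-edges renders the gluing unambiguous, is where the work lies.
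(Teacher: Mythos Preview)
Your inductive argument is correct and is exactly the natural way to justify the proposition. Note, however, that the paper does not actually supply a proof: the proposition is stated as a direct consequence of the recursive block decomposition described in the preceding paragraph, with a reference to Addario-Berry for the non--tree-rooted case and the remark that it extends verbatim once one checks that $\tau(\m)\cap\b$ is a spanning tree of each block. Your write-up is thus a fleshed-out version of what the paper leaves implicit, and the point you flag as the main obstacle---that the union of the block spanning trees reconstitutes $\tau(\m)$---is precisely the one place where the tree-rooted setting requires an extra sentence beyond the classical case.
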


\section{Asymptotic enumeration}
\subsection{Asymptotic enumeration of \texorpdfstring{$2$}{2}-connected tree-rooted maps}

We obtain here an asymptotic estimate for the number $b_n:=[y^n]B(y)$ of $2$-connected tree-rooted maps of size $n$. The steps are as follows: we first lift (\cref{lem:sing_M}) the asymptotic estimate $m_n\sim\frac{4}{\pi n^3}16^n$ for tree-rooted maps to a singular expansion for the generating function $M(z)$. Then via~\cref{eqn-cartes-boisees}, we get in \cref{prop:dvp-B-boisee} the radius of convergence $\rho_B$ and the singular expansion of $B(y)$ around $\rho_B$. In order to transfer the singular expansion to an asymptotic estimate for $b_n$, we also show that $\rho_B$ is the unique dominant singularity of $B(y)$, using a combinatorial argument.

\begin{lemma}\label{lem:sing_M}
When $z\to \rho_M = \frac{1}{16}$ in $\C \setminus{} \{z\geq \rho_M\}$, one has, with $Z=1-16z$,
\begin{equation}
\label{dvp-M-cartes-boisees}
M(z) = 8 - \frac{64}{3\pi} - 8 \(\frac{10}{3\pi} - 1\) Z - \frac{2}{\pi}\ln\(Z\) Z^2+ O\(Z^2\).
\end{equation}
\end{lemma}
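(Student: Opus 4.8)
The starting point is Mullin's exact formula $m_n=\Cat_n\Cat_{n+1}$, which already fixes $\rho_M=\frac{1}{16}$: the ratio
$\frac{m_{n+1}}{m_n}=\frac{\Cat_{n+2}}{\Cat_n}=\frac{16\,(n+\frac12)(n+\frac32)}{(n+2)(n+3)}$
tends to $16$, and reading it off identifies $M$ with a hypergeometric series,
\begin{equation*}
M(z)={}_3F_2\!\left(\tfrac12,\tfrac32,1;\,2,3;\,16z\right).
\end{equation*}
This representation continues $M$ analytically to $\C\setminus[\frac1{16},\infty)$, so $M$ is $\Delta$-analytic, a singular expansion at $\rho_M$ is meaningful, and (using positivity of the coefficients and Pringsheim) $\rho_M$ is the unique dominant singularity. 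The nontrivial local exponent at the singular point $16z=1$ is $s=(2+3)-(\frac12+\frac32+1)=2$. Since $s=2$ is a nonnegative integer, it resonates with the analytic local solution, which forces a logarithm: the local structure of $M$ near $\rho_M$ is an analytic part $\alpha_0+\alpha_1 Z+\alpha_2 Z^2+\cdots$ (with $Z=1-16z$) plus a singular term of the form $c\,Z^2\ln Z$.

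I would fix the coefficient $c$ of the singular term from the coefficient asymptotics. Stirling's formula gives $\Cat_n\sim \frac{4^n}{\sqrt\pi\,n^{3/2}}$, hence $m_n=\Cat_n\Cat_{n+1}\sim \frac{4}{\pi}\,16^n n^{-3}$ (the estimate quoted in the text). On the other side, the transfer theorem yields $[z^n]\big(Z^2\ln\frac1Z\big)\sim 2\cdot 16^n n^{-3}$, so matching $[z^n]\big({-}c\,Z^2\ln Z\big)=c\,[z^n]\big(Z^2\ln\frac1Z\big)\sim 2c\cdot 16^n n^{-3}$ with $\frac4\pi 16^n n^{-3}$ gives $c=-\frac2\pi$, which is the stated $-\frac2\pi \ln(Z)Z^2$.

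It remains to pin down the regular part, i.e. $\alpha_0$ and $\alpha_1$; because $Z=1-16z$, these are exactly $\alpha_0=M(\rho_M)$ and $\alpha_1=-\frac1{16}M'(\rho_M)$, both \emph{convergent} sums (the terms decay like $n^{-3}$ and $n^{-2}$ respectively). I would evaluate them in closed form through the moment representation of the Catalan numbers, $\Cat_n=\frac1{2\pi}\int_0^4 t^n\sqrt{\tfrac{4-t}{t}}\,dt$, which turns $M$ into a single explicit integral
\begin{equation*}
M(z)=\frac{1}{2\pi}\int_0^4\frac{C(zt)-1}{zt}\,\sqrt{\frac{4-t}{t}}\;dt,\qquad C(x)=\frac{1-\sqrt{1-4x}}{2x}.
\end{equation*}
Evaluating this integral and its $z$-derivative at $z=\frac1{16}$ (where $\sqrt{1-4zt}$ vanishes precisely at the endpoint $t=4$) produces the constants $8-\frac{64}{3\pi}$ and $-8\big(\frac{10}{3\pi}-1\big)$; alternatively one may invoke a classical ${}_3F_2$ summation theorem at argument $1$ for the value $M(\frac1{16})={}_3F_2(\frac12,\frac32,1;2,3;1)$ and a contiguous relation for $M'(\frac1{16})$.

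The main obstacle is twofold. First, one cannot in general ``invert'' singularity analysis, so the genuinely safe justification that the regular coefficients are $M(\rho_M)$ and $-\frac1{16}M'(\rho_M)$ and that the singular part is exactly $-\frac2\pi Z^2\ln Z+O(Z^2)$ must come from the explicit analytic description of $M$ — either the ${}_3F_2$ connection formula at $x=1$ or the endpoint analysis of the integral above — rather than from the asymptotics of $m_n$ alone; the asymptotics only confirm the coefficient $-\frac2\pi$. Second, the exact evaluation of the two convergent constants is the delicate computational point: it is where the transcendental combinations with $\pi$ enter, and controlling the next order is what guarantees that the remainder is genuinely $O(Z^2)$ and not merely $o(Z^2\ln Z)$.
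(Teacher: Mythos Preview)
Your argument is correct and structurally parallel to the paper's proof: both first establish that the local expansion at $\rho_M$ has the form ``analytic in $Z$'' plus a term $b\,Z^2\ln Z$, then fix $b$ by matching the transferred asymptotics against $m_n\sim\frac{4}{\pi}16^n n^{-3}$, and finally read off the first two regular coefficients as $M(1/16)$ and $-\frac1{16}M'(1/16)$ from the convergent sums. The difference lies only in the mechanism used for the first step. The paper derives the third-order $D$-finite ODE satisfied by $M$, observes that $1/16$ is a regular singular point, and lets \texttt{Maple}'s \texttt{DEtools} produce a local basis, which forces the shape $\sum_{k\geq 0}a_kZ^k-\ln(Z)\sum_{k\geq 2}b_kZ^k$; it then invokes Pringsheim to guarantee some $b_k\neq 0$ and matches against $m_n$ to get $k=2$, $b_2=2/\pi$. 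You instead recognise $M(z)={}_3F_2(\tfrac12,\tfrac32,1;2,3;16z)$ and appeal to the classical connection theory at $x=1$, where the exponent $s=d+e-a-b-c=2$ is a nonnegative integer and hence resonates, producing exactly $Z^2\ln Z$. Your route avoids computer algebra and makes the integer exponent $2$ transparent; the paper's $D$-finite route is more mechanical and generalises more readily to series without a closed hypergeometric form. The ``obstacles'' you flag are real but are handled identically in both approaches: the rigorous form of the expansion comes from the local basis (Frobenius/connection formula), not from inverting the asymptotics, and the closed forms $8-\frac{64}{3\pi}$ and $-8(\frac{10}{3\pi}-1)$ are in both cases obtained by summing the explicit series at $z=1/16$.
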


\begin{proof}
The explicit expression of $m_n$ translates to a $D$-finite equation satisfied by $M(z)$:
\begin{equation}
\label{diffeq-homo-cartes-boisees}
M'''(z) + \frac6z M''(z) + \frac{6(18z - 1)}{z^2(16z - 1)} M'(z) + \frac{12}{z^2(16z - 1)}M(z) = 0.
\end{equation}
$D$-finite equation theory~\cite[Sec. VII.9.1, p. 518]{flajolet2009analytic} gives that the finite singularities of a solution $f(z)$ of \cref{diffeq-homo-cartes-boisees} are among the zeroes of the denominators of the coefficients: $\mathcal{S} = \{0, 1/16\}$; and any solution of \cref{diffeq-homo-cartes-boisees} is analytically continuable along any path avoiding $\mathcal{S}$. In particular, the solution $M(z) = \sum_{n\geq 0} \mathrm{Cat}_n \mathrm{Cat}_{n+1} z^n$, which is clearly analytic at $0$, is continuable to the whole complex plane slit by the half-line $z \geq 1/16$.

Moreover, $1/16$ is a so-called \emph{regular} singularity;  
and, using the \verb|DEtools| package of the \verb|Maple| computer algebra software, one can compute singular expansions for a basis of solutions of~\cref{diffeq-homo-cartes-boisees}. The singular expansion of $M(z)$
is then a linear combination of the basis' singular expansions, which gives:
\[M(z) = \sum_{k=0}^\infty a_k Z^k - \ln(Z) \sum_{k=2}^\infty b_k Z^k,\quad\text{with}\quad Z=1 - 16z,\]
holding in a slit neighborhood of $1/16$. From the explicit expression $[z^n]M(z)=\Cat_n\Cat_{n+1}$ it follows that $a_0 = M(1/16) = 8 - \frac{64}{3\pi}$ and $a_1 = M'(1/16)/(-16) = -8\(\frac{10}{3\pi} - 1\)$. By Pringsheim's theorem, $M(z)$ is singular at its radius of convergence $1/16$ so there exists a smallest integer $k \geq 2$ such that $b_k \ne 0$. By applying transfer theorems~\cite[Chap. VI]{flajolet2009analytic}, one has
\[[z^n] M(z) \sim (-1)^{k} \frac{b_k k!}{n^{k+1}} 16^n.\]
Since $\Cat_n\Cat_{n+1}\sim\frac{4}{\pi n^3}16^n$, one must have $k=2$ and $b_2 =\frac{2}{\pi}$, which concludes the proof.
\end{proof}

\begin{proposition}
\label{prop:dvp-B-boisee}
The radius of convergence of $B(y)$ is
\begin{equation}
\label{eq:rho-b}\rho_B := \rho_M M^2(\rho_M) = \frac{4(3\pi - 8)^2}{9\pi^2} \approx 0.091,
\end{equation}
and, when $y\to\rho_B$ in a $\Delta$-neighbourhood of $\rho_B$, one has, with $Y=1-y/\rho_B$,
\begin{equation}
B(y) = 8 -\frac{64}{3 \pi} - \frac{8 (10 -3 \pi) (3 \pi-8 )}{9 \pi (4-\pi)}\, Y -\frac{2(3\pi-8)^3}{27\pi(4 - \pi)^3 }\, \ln (Y) Y^{2}+O (Y^{2}).
\end{equation}
Moreover, $\rho_B$ is the unique dominant singularity of $B(y)$. 
\end{proposition}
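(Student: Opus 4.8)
The plan is to read off all the information about $B$ from the functional equation \cref{eqn-cartes-boisees}, written as $M(z)=B(\phi(z))$ with $\phi(z):=zM(z)^2$, combined with the singular expansion of $M$ from \cref{lem:sing_M}. First I would substitute that expansion into $\phi(z)=zM(z)^2$ to obtain the local behaviour of $\phi$ at $\rho_M=1/16$. Writing $Z=1-16z$, this gives $\phi(\rho_M)=\rho_M M(\rho_M)^2$, which is the claimed value of $\rho_B$, together with an expansion $\phi(z)=\rho_B+\phi_1 Z+c\,Z^2\ln Z+O(Z^2)$ with $\phi_1,c\neq 0$. The crucial feature is that $\phi_1\neq0$: using $M(\rho_M)=8-\tfrac{64}{3\pi}$ and the value of $M'(\rho_M)$ read off from \cref{lem:sing_M}, one computes $\phi'(\rho_M)=\tfrac{64(3\pi-8)(4-\pi)}{3\pi^2}$, which is positive since $8/3<\pi<4$. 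This non-degeneracy is precisely what makes the scheme ``subcritical-like'' and guarantees that the singular exponent $2$ is preserved in passing from $M$ to $B$.

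Next I would invert the relation $y=\phi(z)$ to express the local variable $Z$ in terms of $Y=1-y/\rho_B$ and substitute into $M(z)=B(y)$. To leading order $Z=\kappa Y+\cdots$ with $\kappa=-\rho_B/\phi_1=\tfrac{3\pi-8}{3(4-\pi)}$, and multiplying $\kappa$ by the linear coefficient of $M$ in \cref{lem:sing_M} already reproduces the announced coefficient of $Y$. The step I expect to be the main obstacle, and the one most prone to error, is that $\phi$ is itself singular at $\rho_M$: its $c\,Z^2\ln Z$ term forces the inverse to be non-analytic, carrying its own singular correction $Z=\kappa Y+\mu\,Y^2\ln Y+O(Y^2)$. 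One must first bootstrap this correction by plugging $Z\sim\kappa Y$ back into the singular term, and only then substitute the full $Z(Y)$ into the expansion of $M$; skipping this feedback produces a spurious factor. Once done, the $Y^2\ln Y$ contributions coming from $\mu$ and from the $Z^2\ln Z$ term of $M$ combine and collapse to the stated coefficient $-\tfrac{2}{\pi}\kappa^3=-\tfrac{2(3\pi-8)^3}{27\pi(4-\pi)^3}$. Making this rigorous also requires observing that, since $\phi_1\neq0$, $\phi$ maps a $\Delta$-domain at $\rho_M$ biholomorphically onto one at $\rho_B$, so that the expansion holds in a genuine $\Delta$-neighbourhood and not merely along the real axis.

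The value $\rho_B$ and the claim that it is the radius of convergence then follow directly: $B$ has nonnegative coefficients, and for real $y\in[0,\rho_B)$ one has $B(y)=M(\phi^{-1}(y))<\infty$, so the radius is at least $\rho_B$; conversely the nonzero $Y^2\ln Y$ term just obtained shows that $B$ is singular at $\rho_B$, so the radius equals $\rho_B$.

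Finally, for the uniqueness of the dominant singularity I would run an aperiodicity argument. Because $m_n=\Cat_n\Cat_{n+1}>0$ for every $n$ (equivalently, there are $2$-connected tree-rooted maps, such as bundles of parallel edges, of every size), the coefficients of $M$ are aperiodic, so by the Daffodil lemma $|M(z)|<M(\rho_M)$ for every $z$ on the circle $|z|=\rho_M$ with $z\neq\rho_M$. Hence $|\phi(z)|=\rho_M|M(z)|^2<\rho_B$ there, i.e.\ the only preimage of the circle $|y|=\rho_B$ under $\phi$ is $\rho_M$. Consequently no point of the circle $|y|=\rho_B$ other than $\rho_B$ can be a singularity of $B=M\circ\phi^{-1}$, so $\rho_B$ is the unique dominant singularity, which is exactly the input needed to transfer the singular expansion into an asymptotic estimate for $b_n$.
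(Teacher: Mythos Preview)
Your derivation of the singular expansion and of the radius of convergence is correct and is essentially the paper's own argument: set $\phi(z)=zM(z)^2$, read off its expansion at $\rho_M$ from \cref{lem:sing_M}, check $\phi'(\rho_M)\neq 0$, invert by bootstrapping, and compose. Your observation that the two $Y^2\ln Y$ contributions combine to $-\tfrac{2}{\pi}\kappa^3$ with $\kappa=\tfrac{3\pi-8}{3(4-\pi)}$ is also right.

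The gap is in your uniqueness argument. From the Daffodil lemma you correctly get $|\phi(z)|<\rho_B$ for every $z$ on (in fact, in) the closed disk $|z|\le\rho_M$ with $z\neq\rho_M$. But this says that such points $y_0\neq\rho_B$ on the circle $|y|=\rho_B$ have \emph{no} $\phi$-preimage in $\overline{D(0,\rho_M)}$, which is the opposite of what you would need. To deduce that $B$ is analytic at $y_0$ via $B=M\circ\phi^{-1}$ you must first know that $\phi^{-1}$ continues analytically to a neighbourhood of $y_0$ with values in the domain of analyticity of $M$; that is exactly the statement in question, and the Daffodil bound on $\phi$ gives no information about it. Aperiodicity of the coefficients of $B$ (your ``bundles of parallel edges'' remark) is likewise not enough on its own: for a generic power series with nonnegative aperiodic coefficients there is no reason the dominant singularity should be unique.

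The paper closes this gap by a genuinely different, combinatorial route: it proves a core/series--parallel decomposition giving
\[
B(y)=1+2y+2yA(y)+yA'(y)\,Q(A(y)),
\]
where $A$ is an explicit algebraic series with $\rho_A>\rho_B$ and $Q$ has radius $A(\rho_B)$. Here the Daffodil lemma is applied to the \emph{inner} function $A$, which is already analytic on the whole closed disk $|y|\le\rho_B$; for $|y|=\rho_B$, $y\neq\rho_B$, one gets $|A(y)|<A(\rho_B)$, so $A(y)$ lies strictly inside the disk of convergence of $Q$ and the composition is analytic at $y$. The point is that this scheme is supercritical ($A$ stays regular while $Q$ becomes singular), whereas in your formulation $B=M\circ\phi^{-1}$ the inner function $\phi^{-1}$ is the one whose regularity on the circle is unknown. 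If you want to rescue an analytic-only argument, you would have to exploit that $M$, hence $\phi$, extends to the slit plane and then control the analytic continuation of $\phi^{-1}$ up to the circle $|y|=\rho_B$; this is not automatic and is precisely what the paper's decomposition circumvents.
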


\begin{remark}
The generating series $B(y)$ is not $D$-finite (having a transcendental radius of convergence), but from~\cref{diffeq-homo-cartes-boisees,eqn-cartes-boisees} it is $D$-algebraic.
\end{remark}

\begin{proof}[Proof of \cref{prop:dvp-B-boisee}]
Let $H(z)=zM(z)^2$, so that one has $M(z)=B(y)$, where $y=H(z)$. Note that $H(z)$ has radius of convergence $1/16$, and it inherits from $M(z)$ a singular expansion of the form (with $Z=1-16z$):
$$
H(z)=\tau-\kappa\, Z+\xi\,\ln(Z)Z^2+O(Z^2)
$$
for $\tau,\kappa,\xi$ explicit (\emph{e.g.} $\tau=(8-\frac{64}{3\pi})^2/16$).

The function $H$ is analytic on $D(0,1/16)$ and since $H'(z)>0$ for any $z\in [0,1/16)$, one can apply the analytic local inversion theorem at any such value of $z$. Moreover, $H$ maps the interval $[0,1/16]$ to the interval $[0,\tau]$, so one can define a functional inverse $g$ of $H$ on a neighborhood of $[0,\tau)$, which is analytic on this domain.
Furthermore, $H'$ is continuous in a $\Delta$-neighbourhood of $1/16$, with positive value at $1/16$, hence $H(z)$ is injective on a $\Delta$-neighbourhood $U$ of $1/16$, and maps $U$ to an open region containing a $\Delta$-neighbourhood $V$ of $\tau$.

Using bootstrapping, the singular expansion of $g(y)$ at $\tau$ (valid in $V$) is easily obtained from the singular expansion of $H(z)$. With $Y=1-y/\tau$, one gets
$$
z=g(y)=\frac{1}{16} -\frac{\tau}{16\kappa}\,Y-\frac{\tau^2}{16\kappa^3}\,\ln(Y)Y^2+O(Y^2).
$$
With $Z=1-16z$, this gives
$$
Z=\frac{\tau}{\kappa}\,Y +\frac{\tau^2}{\kappa^3}\,\ln(Y)Y^2+O(Y^2).
$$
Then, $B(y)=M(g(y))$ is analytic at every point in $[0,\tau)$, and
the claimed singular expansion of $B(y)$ at $\rho_B:=\tau$ 
is obtained by composing the singular expansion of $M(z)$ with the singular expansion of $g(y)$, \emph{i.e.}, injecting the above expansion of $Z$ into the expansion in \cref{lem:sing_M}.
By Pringsheim's theorem, $\rho_B$ is the radius of convergence of $B(y)$.

It remains to prove that $\rho_B$ is the unique dominant singularity of $B(y)$. To do so, we use the trick of writing $B(y)$ as a supercritical composition scheme (in the sense of~\cite{Gourdon98}), which we achieve thanks to a decomposition into series-parallel components. Doing so, we prove in \cref{lem:seriesParallel} that $B(y)$ can be written as
\begin{equation}
\label{eq:BQA}
B(y)=1+ 2y + 2yA(y)+ yA'(y)Q(A(y)),
\end{equation} 
for some generating functions $A(y)$ and $Q(w)$ with nonnegative coefficients, such that $A(y)$ is non-periodic and has radius of convergence larger than $\rho_B$. This implies that the radius of convergence of $Q(w)$ is $A(\rho_B)$. Moreover, by the Daffodil Lemma, see~\cite[Lem. IV.1, p.~266]{flajolet2009analytic}, for any $y\neq \rho_B$ such that $|y|=\rho_B$, we have that $|A(y)|<A(\rho_B)$. Hence, $A(y)$ belongs to the disk of convergence of $Q$ and $y$ cannot be a singularity, which concludes the proof.
\end{proof}

Let $Q(w)$ be the generating function of $2$-connected tree-rooted maps with no face of degree $2$ nor vertex of degree $2$, with $w$ counting the number of non-root edges. A $2$-connected map with at least 2 edges is called \emph{series-parallel} if it has no $K_4$ minor. Let $A(y)$ (resp. $\bA(y)$) be the generating function of $2$-connected tree-rooted series-parallel maps such that the root-edge is not (resp. is) in the spanning tree, the variable $y$ counting the number of non-root edges.

\begin{lemma}\label{lem:seriesParallel}
The generating series $B(y)$, $A(y)$ and $Q(w)$ satisfy the identity \cref{eq:BQA}.

Moreover, the radius $\rho_A$ of convergence of $A$ satisfies $\rho_A=2-3\cdot 2^{-2/3} \approx 0.11>\rho_B$.
\end{lemma}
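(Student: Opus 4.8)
The statement has two independent parts: the functional identity \cref{eq:BQA}, and the value of $\rho_A$. For the identity I would use the series--parallel (``$3$-connected core'') decomposition of $2$-connected tree-rooted maps together with a planar-duality observation; for the radius I would set up the decorated series/parallel network equations and extract the dominant branch point. The plan is as follows.

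\textbf{Deriving \cref{eq:BQA}.} Every $2$-connected map has a canonical \emph{core}, obtained by iteratively suppressing degree-$2$ vertices (series reductions) and degree-$2$ faces (parallel reductions); confluence of these reductions (equivalently, uniqueness of the $3$-connected/SPQR decomposition) makes the core well defined, and it is exactly a map with no degree-$2$ vertex or face, hence counted by $Q$. Conversely $\b$ is rebuilt from its core by substituting a series--parallel network into each core edge. I would split $B$ according to the core: the maps of size $0$ and $1$ give $1+2y$; the series--parallel maps (trivial core, size $\ge 2$) give $2yA(y)$, since $A+\bA$ enumerates all $2$-connected series--parallel tree-rooted maps by non-root edges and the extra $y$ converts to total size; and the maps with non-trivial core give the last term. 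The key simplification is the identity $A=\bA$, proved by planar duality: sending $(\m,\tau)$ to $(\m^{*},E(\m)\setminus\tau)$ swaps ``root in the spanning tree'' with ``root not in the spanning tree'', preserves the number of non-root edges, and preserves both $2$-connectedness and series--parallelness. Since a network glued on a tree-edge of the core must join its two poles while one on a non-tree edge must separate them, each core edge receives an $A$- or an $\bA$-network, which by $A=\bA$ both contribute $A(y)$; the $E(\kappa)-1$ non-root core edges then yield $Q(A(y))$, whereas the root edge carries the network containing the root of $\b$, whose root half-edge must still be chosen among that network's edges, a pointing contributing $yA'(y)$. Assembling the four cases gives \cref{eq:BQA}; I would sanity-check the first coefficients against $B(y)=1+2y+2y^2+\cdots$ obtained from \cref{eqn-cartes-boisees}.

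\textbf{Computing $\rho_A$.} I would write the decorated series/parallel system for networks $D=\b$ minus its root, classified by pole type ($c$: the spanning forest connects the poles, corresponding to $A$; $s$: it separates them into exactly two components, corresponding to $\bA$) and by top operation. The delicate point is the ``exactly two components'' bookkeeping: an Euler-characteristic count shows that a series of ingredients is of type $s$ iff exactly one ingredient is of type $s$, and dually a parallel of branches is of type $s$ iff all branches are of type $s$. This yields four equations for $A_s,A_p,\bA_s,\bA_p$ invariant under the duality involution $A_s\leftrightarrow\bA_p,\ A_p\leftrightarrow\bA_s$; setting $a=A_s=\bA_p$, $b=A_p=\bA_s$ collapses the system to two equations, and eliminating with $v:=y+b$ gives the rational parametrisation
\[
A=\frac{v}{1-v},\qquad y=\frac{v\,(v^2-3v+1)}{1-v}.
\]
The dominant singularity is the smallest positive branch point, where $\mathrm{d}y/\mathrm{d}v=(-2v^3+6v^2-6v+1)/(1-v)^2$ vanishes; the numerator rewrites as $2(v-1)^3=-1$, i.e.\ $v^\star=1-2^{-1/3}$, and substituting back gives $\rho_A=y(v^\star)=2-3\cdot2^{-2/3}\approx0.11>\rho_B\approx0.091$. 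As $-6(v^\star-1)^2\neq0$ the zero of the numerator is simple, so the singularity is of square-root type and, by positivity of coefficients and Pringsheim, it is dominant; non-periodicity of $A$ (needed right afterwards) follows from $A(y)=y+3y^2+\cdots$ having two consecutive nonzero coefficients.

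\textbf{Main obstacle.} I expect the hard part to be the first half: making the core decomposition rigorous (well-definedness of the core, the exact matching between pole type and tree/non-tree status, and the precise pointing that produces the factor $yA'(y)$). Once the decorated series/parallel equations are correctly written down, the extraction of $\rho_A$ is routine algebra, considerably streamlined by the duality identity $A=\bA$.
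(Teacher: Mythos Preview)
Your proposal is correct and follows essentially the same route as the paper: the identity \cref{eq:BQA} is obtained via the core decomposition together with the planar-duality identity $A=\bA$, and $\rho_A$ is computed from the tree-rooted series--parallel network system reduced by the same duality. The only cosmetic difference is in the second half: the paper eliminates directly to the cubic $A^3+(y+1)A^2+(2y-1)A+y=0$ and reads $\rho_A$ off its discriminant $4y^3-24y^2+48y-5$, whereas you produce the rational uniformisation $A=v/(1-v)$, $y=v(v^2-3v+1)/(1-v)$ (which parametrises the same cubic) and locate the branch point via $\mathrm{d}y/\mathrm{d}v=0$; both computations are equivalent and yield $\rho_A=2-3\cdot 2^{-2/3}$.
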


\begin{proof}
A \emph{series-parallel network} $\mathrm{N}$ is obtained by deleting the root-edge $e$ of a series-parallel map, the two extremities of $e$ being called the \emph{poles} of $\mathrm{N}$ (which are distinguished as the \emph{source}, the origin of $e$, and the \emph{sink}, the end of $e$). Note that $A(y)$ is also the generating function of series-parallel networks endowed with a spanning tree, while $\bA(y)$ is the generating function of
series-parallel networks endowed with a spanning forest made of two trees containing each of the two poles. These two cases are respectively called \emph{crossing} and \emph{non-crossing}. 

The \emph{core} $\co$ of a $2$-connected tree-rooted map $\b$ of size $|\b| \geq 2$ is obtained by repeatedly collapsing faces of degree $2$ and erasing vertices of degree $2$ (turning the two incident edges into a single edge). This process is actually well-behaved only if $\b$ is not series-parallel (otherwise it ends at a loop-edge with no vertex). Conversely, a $2$-connected map $\b$ is obtained from its core $\co$ where every edge is replaced by a series-parallel network. By convention, the root-edge of $\co$ is chosen as the one bearing the series-parallel network containing the root-edge of $\b$. If $\b$ is endowed with a spanning tree $\tau := \tau(\b)$, then for each edge $e$ of $\co$, letting $\mathrm{N}_e$ be the associated series-parallel network, on $\mathrm{N}_e$ the tree $\tau$ induces either a spanning tree, or a spanning forest with two trees containing each of the two poles. In the first case, $e$ is declared a tree-edge of $\co$, and thus $\tau$ induces a spanning tree on $\co$. In terms of generating functions, $Q(w)$ is the counting series for the core, each non-root edge of the core then contributing either $A(y)$ if a tree-edge (crossing case) or contributing $\bA(y)$ otherwise (non-crossing case). Since, by duality we have $A(y)=\bA(y)$, every non-root edge turns out to have the same contribution $A(y)$. On the other hand, the root-edge of the core contributes $A'(y)$ because of the choice of the root-edge. This yields the claimed equation~\cref{eq:BQA}, where the added term $2yA(y)$ accounts for $2$-connected tree-rooted series-parallel maps.

Now, to get the statement about $\rho_A$, it is well-known that a series-parallel network is either reduced to a single edge, or made of at least two series-parallel networks connected in series, or made of at least two series-parallel networks connected in parallel.
The series-parallel decomposition then yields the following equation-system:
\begin{align*}
A(y)&=y+S(y)+P(y),& \bA(y)&=y+\bS(y)+\bP(y),\\
S(y)&=\frac{(y+P(y))^2}{1-y-P(y)},& \bS(y)&=(y+\bP(y))\( \frac1{(1-y-P(y))^2}-1 \),\\
P(y)&=(y+S(y))\( \frac1{(1-y-\bS(y))^2}-1 \),& \bP(y)&= \frac{(y+\bS(y))^2}{1-y-\bS(y)}.
\end{align*}
By symmetry, one has $S(y)=\bP(y)$ and $P(y)=\bS(y)$, which yields $A(y)=\bA(y)$ (this is also clear by duality).
Hence, the function $A(y)$ is algebraic and satisfies\footnote{The series $A(y)$ is represented in  Sloane's OEIS by the sequence A121873, which enumerates non-crossing plants in the $(n+1)$-sided regular polygon \cite{Chap07}.}
\[A \! \left(y \right)^{3}+\left(y +1\right) A \! \left(y \right)^{2}+\left(2 y -1\right) A \! \left(y \right)+y = 0\]
and its radius of convergence is the smallest positive root of $4y^3 - 24y^2 + 48y - 5$ which gives $\rho_A = 2 - 3 \cdot 2^{-2/3} \approx 0.11$ and is larger than $\rho_B$.

\end{proof}

From \cref{prop:dvp-B-boisee}, by applying transfer theorems~\cite[Chap.VI]{flajolet2009analytic} one directly obtains:
\begin{corollary}
\label{asymptB-boisee}
When $n\to\infty$,
\begin{equation}\label{eq:bn}
b_n\sim \frac{4 \(3 \pi -8\)^{3}}{27 \pi \(4 - \pi \)^{3}} \cdot \rho_B^{-n} \cdot n^{-3}.
\end{equation}
\end{corollary}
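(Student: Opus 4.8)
The plan is to obtain the estimate by singularity analysis, feeding the singular expansion of $B(y)$ established in \cref{prop:dvp-B-boisee} into the transfer theorems of~\cite[Chap. VI]{flajolet2009analytic}. The required analytic hypotheses are already in place: \cref{prop:dvp-B-boisee} asserts both that $\rho_B$ is the \emph{unique} dominant singularity and that the displayed expansion holds in a $\Delta$-neighbourhood of $\rho_B$, which is exactly the framework that licenses transferring a singular expansion into coefficient asymptotics. The uniqueness of the dominant singularity guarantees there is no competing contribution of the same exponential order from elsewhere on the circle $|y|=\rho_B$.

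First I would dispose of the regular part. Writing $Y=1-y/\rho_B$, the constant term and the term proportional to $Y$ are polynomials in $y$, so their coefficients vanish for $n\geq 2$ and they do not contribute. The dominant singular term is $c\,Y^2\ln(1/Y)$ with $c=\frac{2(3\pi-8)^3}{27\pi(4-\pi)^3}$ (using $\ln Y=-\ln(1/Y)$). For this term I would use the elementary identity, valid for $n\geq 2$,
\[
[z^n]\,(1-z)^2\ln\frac{1}{1-z}=\frac1n-\frac{2}{n-1}+\frac{1}{n-2}=\frac{2}{n(n-1)(n-2)}\sim\frac{2}{n^3},
\]
so that, after the rescaling $y=\rho_B z$, the contribution of this term is $\sim 2c\,\rho_B^{-n}n^{-3}$, which reproduces precisely the announced constant $\frac{4(3\pi-8)^3}{27\pi(4-\pi)^3}$.

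The step requiring the most care is controlling the $O(Y^2)$ remainder: a naive big-$O$ transfer of $O(Y^2)$ alone only yields $O(\rho_B^{-n}n^{-3})$, the same order as the main term, and so would not by itself deliver a \emph{sharp} equivalent. The resolution is that, by construction, $B=M\circ g$ arises from composing the singular expansions of \cref{lem:sing_M} and of the inverse $g$, whose exponents are integers accompanied only by logarithmic factors; hence the remainder splits as an \emph{analytic} term $a_2Y^2$ (a genuine power of $y$, contributing nothing to the coefficients for large $n$) plus strictly smaller singular terms of order $Y^3\ln(1/Y)$. The latter transfer to $O(\rho_B^{-n}n^{-4})=o(\rho_B^{-n}n^{-3})$. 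Thus only the $Y^2\ln(1/Y)$ term contributes at order $n^{-3}$, and the equivalence~\cref{eq:bn} follows.
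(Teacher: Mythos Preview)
Your proposal is correct and follows essentially the same route as the paper, which simply states that the corollary follows from \cref{prop:dvp-B-boisee} by applying the transfer theorems of~\cite[Chap.~VI]{flajolet2009analytic}, without spelling out the coefficient computation or the handling of the remainder. Your explicit treatment of the $O(Y^2)$ term is a useful addition; as a minor refinement, note that the higher-order singular remainder may involve powers of $\ln Y$ beyond the first (e.g.\ $Y^3(\ln Y)^2$ arising from the composition), but these still transfer to $o(\rho_B^{-n}n^{-3})$ and so do not affect the conclusion.
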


\subsection{Enumerative phase transition for block-weighted tree-rooted maps}
In this section, we investigate the singular expansion of $z\mapsto M(z,u)$ around its radius of convergence. We prove that this expansion exhibits three possible behaviours depending on the value of $u$.

\begin{proposition}[Definitions of $u_C$ and of $y(u)$]\label{prop:defUc}
Recall that $\rho_B$ is the radius of convergence of $B(y)$. For $u\geq 0$, the equation
\begin{equation}\label{eq:yu}
\frac{2yuB'(y)}{uB(y)+1-u}=1
\end{equation}
has a unique solution in $[0,\rho_B]$, denoted by $y(u)$, if and only if $u\geq u_C$, where
\begin{equation}\label{eq:uC}
u_C:=\frac{9 \pi \(4-\pi \)}{ 420 \pi - 81 \pi^{2}- 512}\simeq 3.02.
\end{equation} 
Moreover, we set $y(u):=\rho_B$ for $u\leq u_C$.
\end{proposition}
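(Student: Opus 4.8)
The plan is to recast \cref{eq:yu} as the vanishing of a single auxiliary function and to reduce the whole statement to the monotonicity of that function on $[0,\rho_B]$. Since $B$ has nonnegative coefficients with $B(0)=1$, we have $B(y)\geq 1$ on $[0,\rho_B]$, so the denominator $uB(y)+1-u=u(B(y)-1)+1\geq 1>0$ for every $u\geq 0$ and every $y\in[0,\rho_B]$. Consequently, on $[0,\rho_B]$ the equation \cref{eq:yu} is equivalent to $g_u(y)=0$, where
\[
g_u(y):=uB(y)+1-u-2yuB'(y),
\]
and I would begin by recording the boundary value $g_u(0)=1>0$.

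The heart of the argument is that $g_u$ is strictly decreasing. Differentiating gives $g_u'(y)=-u\left(B'(y)+2yB''(y)\right)$, and because $B$ has nonnegative coefficients with $B(y)=1+2y+\cdots$, both $B'(y)>0$ and $B''(y)\geq 0$ hold on $(0,\rho_B)$; hence $g_u'(y)<0$ there for every $u>0$. Thus $g_u$ is continuous and strictly decreasing on $[0,\rho_B]$, so by the intermediate value theorem it has a (necessarily unique) root in $[0,\rho_B]$ if and only if $g_u(\rho_B)\leq 0$; when $g_u(\rho_B)>0$ strict monotonicity forces $g_u>0$ throughout and there is no root. (For $u=0$ one has $g_0\equiv 1$ and no root, consistent with $u_C>0$.) This settles existence and uniqueness once the sign of $g_u(\rho_B)$ is determined, and it shows that at the threshold the root sits exactly at $\rho_B$, making the convention $y(u):=\rho_B$ for $u\leq u_C$ continuous at $u_C$.

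It then remains to evaluate $g_u(\rho_B)$ from the singular expansion of \cref{prop:dvp-B-boisee}. Write $\beta_0:=B(\rho_B)=8-\frac{64}{3\pi}$ and let $c_1:=\frac{8(10-3\pi)(3\pi-8)}{9\pi(4-\pi)}$ be the coefficient of $-Y$. The key point is that the $\ln(Y)Y^2$ term contributes nothing to the first derivative in the limit $y\to\rho_B$, so $B$ is $C^1$ up to $\rho_B$ with $B'(\rho_B)=c_1/\rho_B$, and therefore $2\rho_B B'(\rho_B)=2c_1$. This yields
\[
g_u(\rho_B)=u\beta_0+(1-u)-2c_1u=u\left(\beta_0-1-2c_1\right)+1.
\]
A routine simplification gives $1+2c_1-\beta_0=\frac{420\pi-81\pi^2-512}{9\pi(4-\pi)}$, which is positive (numerically the ratio $\approx 8.03/24.27$). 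Hence $\beta_0-1-2c_1<0$, and $g_u(\rho_B)\leq 0$ is equivalent to $u\geq \frac{1}{1+2c_1-\beta_0}$, which is exactly the value $u_C$ of \cref{eq:uC}.

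The only genuine subtleties are analytic bookkeeping rather than new ideas: justifying that $B'$ extends continuously to $\rho_B$ with value $c_1/\rho_B$ (so that the logarithmic term is invisible at first order in $g_u(\rho_B)$), and carrying out the algebraic simplification identifying $\frac{1}{1+2c_1-\beta_0}$ with the closed form in \cref{eq:uC}. Everything else follows immediately from the strict monotonicity of $g_u$.
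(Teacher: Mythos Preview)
Your argument is correct. The paper actually states \cref{prop:defUc} without proof, treating it as a computation embedded in the Lagrangean-inversion analysis of \cref{prop:Mu} (where \cref{eq:yu} is exactly the vanishing of $\frac{\mathrm d}{\mathrm dy}\Psiu(y)$, and the at-most-one-root property is the standard convexity fact for such schemes, \emph{cf.}\ \cite[Thm~VI.6]{flajolet2009analytic}). Your approach makes this explicit and self-contained: rewriting \cref{eq:yu} as $g_u(y)=0$ and using $g_u'(y)=-u\bigl(B'(y)+2yB''(y)\bigr)<0$ is precisely the monotonicity argument underlying those schemes, and your identification of the threshold via $g_u(\rho_B)=1+u(\beta_0-1-2c_1)$ with $1+2c_1-\beta_0=\frac{420\pi-81\pi^2-512}{9\pi(4-\pi)}$ recovers \cref{eq:uC} cleanly. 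The only point worth stating more carefully is the one you flag yourself: that $B'$ extends continuously to $\rho_B$ with value $c_1/\rho_B$; this follows directly from the singular expansion in \cref{prop:dvp-B-boisee}, since the $Y^2\ln Y$ term has vanishing derivative at $Y=0$.
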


\begin{remark}
The function $u\mapsto y(u)$ is non-increasing. It is plotted in \cref{fig:y-u}.
\end{remark}

The value of $u_C$ defined above is the \emph{critical point} of the model, and permits to identify three regimes for which the singular behavior of $M(z,u)$ differs:
\begin{proposition}\label{prop:Mu}
For $u>0$, let $\rho(u)$ be the radius of convergence of $z\mapsto M(z,u)$. Then, one has
\begin{equation}\label{eq:defRhou}
\rho(u)=\frac{y(u)}{(uB(y(u))+1-u)^2},
\end{equation}
and the following singular expansions hold in a $\Delta$-neighbourhood of $\rho(u)$, with $Z=1-z/\rho(u)$. 
\begin{itemize}
	\item
When $u<u_C$ (subcritical case),
\begin{equation}\label{eq:Musub}
M(z,u)=q(u)-r(u)\,Z-s(u)\ln(Z)Z^2+O(Z^2),
\end{equation}
where
$$
q(u)=1+u \(7-\frac{64}{3 \pi}\),\ \ r(u)=\frac{8 u \left(3 \pi -8\right) \left(10-3 \pi\right) \left(21 \pi u +3 \pi -64 u \right)}{\left(243 u -27\right) \pi^{3} - \left(1260 u -108\right) \pi^{2}+1536 \pi u},
$$
$$
s(u)=\frac{2 u \left(21 \pi u +3 \pi -64 u \right)^{3} \left(3 \pi -8\right)^{3}}{\pi \left(81 \pi^{2} u + 512 u +36 \pi -420 \pi u -9 \pi^{2} \right)^{3}}.
$$
\item
When $u=u_c$ (critical case), 
\begin{equation}\label{eq:Mucrit}
M(z,u)=q_C - s_C\, \ln(1/Z)^{-1/2}Z^{1/2}+O(Z),
\end{equation}
where
$$
q_C=q(u_C)=\frac{864 \pi - 144 \pi^{2} - 1280}{420 \pi - 81 \pi^{2} -512},\ \ \ s_C=\frac{16 \sqrt{6}\, \left(10-3 \pi\right)^{\frac{3}{2}} \left(4-\pi \right)}{420 \pi - 81 \pi^{2}-512}.
$$
\item
When $u>u_C$ (supercritical case),
\begin{equation}\label{eq:Musuper}
M(z,u)=q(u)-s(u)\, Z^{1/2}+O(Z),
\end{equation}
where 
$$
q(u)=uB(y(u))+1-u,\ \ \ \ s(u)=\frac{u B(y(u)) + 1 - u}{\sqrt{1+2 y(u)\frac{B''(y(u))}{B'(y(u))}}}.
$$
\end{itemize}
Moreover, $\rho(u)$ is the unique dominant singularity of $z\mapsto M(z,u)$ for every $u>0$.
\end{proposition}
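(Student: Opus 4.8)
The plan is to read \cref{eq:MB} as a composition-and-inversion scheme. Set $\psi(y):=uB(y)+1-u$, so that \cref{eq:MB} becomes $M=\psi(zM^2)$, and introduce $\Phi(y):=y/\psi(y)^2$. Since $z\mapsto y=zM(z,u)^2$ satisfies $z=\Phi(y)$, the singular behaviour of $M(z,u)=\psi(y)$ is governed by the functional inversion of $\Phi$. A direct computation gives $\Phi'(y)=(\psi(y)-2y\psi'(y))/\psi(y)^3$, so that $\Phi'(y)=0$ is exactly \cref{eq:yu}; thus the quantity $y(u)$ of \cref{prop:defUc} is precisely the branch point of the inverse, present in $[0,\rho_B]$ if and only if $u\ge u_C$, and $\rho(u)=\Phi(y(u))$ gives \cref{eq:defRhou}. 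This dichotomy --- whether the branch point $y(u)$ is reached before the singularity $\rho_B$ of the inner function $B$ --- produces the three regimes.

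In the \emph{supercritical} case $u>u_C$ we have $y(u)<\rho_B$ (as $y(\cdot)$ is non-increasing with $y(u_C)=\rho_B$), so $\Phi$ is analytic at $y(u)$ with $\Phi'(y(u))=0$ and, after checking $\Phi''(y(u))\ne 0$, a non-degenerate critical point. This is the smooth inverse-function schema of \cite[Chap.~VI]{flajolet2009analytic}: inverting $\Phi$ yields $y(u)-y\sim(2(\rho(u)-z)/\Phi''(y(u)))^{1/2}$, and composing with $\psi$ gives the square-root singularity \cref{eq:Musuper}, the constant $s(u)$ being read off from $\psi'(y(u))$ and $\Phi''(y(u))$. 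In the \emph{subcritical} case $u<u_C$ there is no branch point on $[0,\rho_B]$, hence $\Phi'>0$ there and the singularity of $M$ is inherited from that of $B$ at $\rho_B$ via \cref{prop:dvp-B-boisee}; bootstrapping the inversion of $z=\Phi(y)$ near $\rho_B$ --- exactly as in the proof of \cref{prop:dvp-B-boisee}, but now with a non-vanishing derivative --- transfers the $\ln(Y)Y^2$ term of $B$ to the $\ln(Z)Z^2$ term of \cref{eq:Musub}, the coefficients $q(u),r(u),s(u)$ following by composition.

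The \emph{critical} case $u=u_C$ is the main obstacle, and is the source of the unusual exponent in \cref{eq:Mucrit}. Here $y(u_C)=\rho_B$, so the branch point coincides with the singularity of $B$: both $\Phi'(\rho_B)=0$ and the logarithmic term $\ln(Y)Y^2$ of \cref{prop:dvp-B-boisee} are present. Expanding $z=\Phi(y)$ near $\rho_B$ with $Y=1-y/\rho_B$, the vanishing of the analytic linear term leaves a dominant singular contribution $z-\rho(u_C)\sim K\,\ln(Y)Y^2$ for an explicit nonzero $K$. The delicate step is the asymptotic inversion of this mixed power--logarithm relation: writing $Z=1-z/\rho(u_C)$ and taking logarithms gives $\ln Y\sim\tfrac12\ln Z$, hence $\ln(1/Y)\sim\tfrac12\ln(1/Z)$; substituting back yields $Y\sim c\,\ln(1/Z)^{-1/2}Z^{1/2}$. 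Composing with $\psi(y)=q_C-uc_1Y+\cdots$ then produces \cref{eq:Mucrit}, with $s_C$ obtained by propagating the constants $c_1,c_2$ of \cref{prop:dvp-B-boisee} through $K$.

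Finally, to show that $\rho(u)$ is the \emph{unique} dominant singularity, note that $[z^n]M(z,u)=\sum_{|\m|=n}u^{b(\m)}>0$, so $M(\cdot,u)$ has positive coefficients and is aperiodic (there are tree-rooted maps of every size). The Daffodil Lemma \cite[Lem.~IV.1]{flajolet2009analytic} then gives $|M(z,u)|<M(\rho(u),u)$ for $|z|=\rho(u)$, $z\ne\rho(u)$, whence $|zM(z,u)^2|<\rho(u)M(\rho(u),u)^2=y(u)\le\rho_B$. Since $\rho_B$ is the unique dominant singularity of $B$ by \cref{prop:dvp-B-boisee}, $B$ is analytic at $zM^2$ at every such point; moreover the Jacobian $2zM\psi'(zM^2)-1$ of $M\mapsto\psi(zM^2)-M$ is nonzero there, because $|2zM\psi'(zM^2)|<2\rho(u)M(\rho(u),u)\psi'(y(u))\le 1$, the last product equalling $1$ exactly in the branch-point regimes $u\ge u_C$ by \cref{eq:yu}. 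The implicit function theorem then makes $M(\cdot,u)$ analytic at every such $z$, so $\rho(u)$ is the only singularity on its circle of convergence.
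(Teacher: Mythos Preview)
Your argument is correct and follows essentially the same route as the paper's proof: both introduce $\Phi=\Psi^{(u)}(y)=y/(uB(y)+1-u)^2$, identify the three regimes by whether $\Phi'=0$ has a root in $[0,\rho_B)$, at $\rho_B$, or not at all, and obtain the singular expansions by the smooth inverse-function schema (supercritical) or by bootstrapping the inversion of the $Y^2\ln Y$ expansion (critical and subcritical). The only visible difference is the uniqueness of the dominant singularity, which the paper dispatches in one line by invoking that $H^{(u)}(z)=z(B^{(u)}(H^{(u)}))^2$ is a non-periodic Lagrangean scheme, whereas you unpack this into a Daffodil-plus-implicit-function-theorem argument; the two are equivalent, and your version is a correct explicit rendering of what the Lagrangean citation is hiding.
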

\begin{proof}
Let $\Hu(z):=zM(z,u)^2$, and $\Bu(y):=uB(y)+1-u$. 
Squaring the equation $M(z,u)=\Bu(\Hu(z))$ and multiplying both sides by $z$, one gets the functional equation 
$$
\Hu(z)=z\Bu(\Hu(z))^2,
$$
which is of Lagrangean type. The functional inverse of $\Hu(z)$ is thus $\Psiu(y):=y/\Bu(y)^2$, and the singular expansion of $\Hu(z)$ (and hence of $M(z,u)$) depends on whether $\frac{\mathrm{d}}{\mathrm{d}y}\Psiu(y)=0$~--~which is equivalent to~\cref{eq:yu}~--~admits a solution in $(0,\rho_B)$.

More precisely, for $u>u_c$, $\Hu(z)$ has a dominant singularity of square-root type at $\rho(u)=\Psiu(y(u))$ (see~\cite[Thm VI.6, p. 404-405]{flajolet2009analytic}), and the same holds for $M(z,u)=(\Hu(z)/z)^{1/2}$, with the constants in~\cref{eq:Musuper}.

In the limit case $u=u_C$, one has $\frac{\mathrm{d}}{\mathrm{d}y}\Psiu(y)=0$ at $y=\rho_B$, where one gets (with $Y=1-y/\rho_B$) the expansion $z=\Psiu(y)=\rho(u_C)+\xi\, Y^2\ln(Y)+O(Y^2)$ for some explicit $\xi>0$.
By inversion and bootstrapping, one gets $y=\Hu(z)=\rho_B-\rho_B\,\sqrt{2\rho(u_C)/\xi}\, \sqrt{Z/\ln(1/Z)}+O(Z)$, and a similar expansion holds for $M(z,u)=(\Hu(z)/z)^{1/2}$, with the explicit constants in~\cref{eq:Mucrit}.

For $u<u_C$, one has $\frac{2\rho_B uB'(\rho_B)}{uB(\rho_B)+1-u}<1$, and there is no solution to $\frac{\mathrm{d}}{\mathrm{d}y}\Psiu(y)=0$ on $[0,\rho_B]$.
At $\rho_B$, one gets the expansion $z=\Psiu(y)=\rho(u)-\kappa\,Y+\xi\, Y^2\ln(Y)+O(Y^2)$ for some explicit $\kappa,\xi>0$, and writing $Y=1-y/\rho_B$. By inversion and bootstrapping, one gets $y=\Hu(z)=\rho_B - \frac{\rho(u)\rho_B}{\kappa}\,Z - \frac{\rho(u)^2\rho_B}{\kappa^3}\, Z^2\ln(Z)+O(Z^2)$, and a similar expansion holds for $M(z,u)=(\Hu(z)/z)^{1/2}$, with the explicit constants in~\cref{eq:Musub}.

\medskip

Finally, for every fixed $u>0$, the equation for $\Hu(z)$ is of (non-periodic) Lagrangean type, hence $\rho(u)$ is the unique dominant singularity of $\Hu(z)$, and the same holds for $z\mapsto M(z,u)=\Bu(\Hu(z))$. 
\end{proof}

Applying transfer theorems to the expansions in \cref{prop:Mu} then gives:
\begin{corollary}
Let $u>0$. Then, with the notation of \cref{prop:Mu}, one has the following asymptotic estimates as $n\to\infty$.
\begin{itemize}
	\item 
When $u<u_C$,
\begin{equation}
[z^n]M(z,u)\sim 2s(u)\,\rho(u)^{-n}\, n^{-3}.
\end{equation}
\item
When $u=u_C$,
\begin{equation}
[z^n]M(z,u_C)\sim \frac{s_C}{2\sqrt{\pi}}\, \rho(u)^{-n}\, n^{-3/2}\ln(n)^{-1/2}.
\end{equation}
\item
When $u>u_C$,
\begin{equation}
[z^n]M(z,u)\sim \frac{s(u)}{2\sqrt{\pi}}\, \rho(u)^{-n}\, n^{-3/2}.
\end{equation}
\end{itemize}
\end{corollary}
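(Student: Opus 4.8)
The plan is to read off each estimate directly by singularity analysis, since \cref{prop:Mu} has already supplied everything needed: a singular expansion of $M(z,u)$ valid in a $\Delta$-neighbourhood of the radius of convergence $\rho(u)$, together with the fact that $\rho(u)$ is the \emph{unique} dominant singularity. This last point is what licenses the transfer, as it guarantees that $M(z,u)$ is $\Delta$-analytic at $\rho(u)$ and that no competing singularity on the circle $|z|=\rho(u)$ contributes. To match the statements of \cite[Chap.~VI]{flajolet2009analytic}, which are phrased for a singularity at $1$, I would first invoke the scaling rule $[z^n]f(z/\rho)=\rho^{-n}[z^n]f(z)$, so that a term behaving like $c\,Z^{\alpha}(\ln(1/Z))^{\beta}$ with $Z=1-z/\rho(u)$ contributes $c\,\rho(u)^{-n}\,\frac{n^{-\alpha-1}}{\Gamma(-\alpha)}(\ln n)^{\beta}$ to $[z^n]M(z,u)$, and then apply this term by term in each regime.

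In the supercritical case the leading singular term of \cref{eq:Musuper} is $-s(u)\,Z^{1/2}$, so $\alpha=1/2$, $\beta=0$; using $\Gamma(-1/2)=-2\sqrt{\pi}$ this yields $[z^n]M(z,u)\sim \tfrac{s(u)}{2\sqrt{\pi}}\,\rho(u)^{-n}\,n^{-3/2}$. In the critical case the leading term of \cref{eq:Mucrit} is $-s_C\,\ln(1/Z)^{-1/2}Z^{1/2}$, i.e.\ $\alpha=1/2$, $\beta=-1/2$, producing the extra factor $(\ln n)^{-1/2}$ and giving $[z^n]M(z,u_C)\sim \tfrac{s_C}{2\sqrt{\pi}}\,\rho(u_C)^{-n}\,n^{-3/2}(\ln n)^{-1/2}$. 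The subcritical case is mildly different: in \cref{eq:Musub} the pieces $q(u)$ and $-r(u)Z$ are analytic and affect only finitely many coefficients, so the asymptotics are governed by $-s(u)\ln(Z)Z^{2}=s(u)\ln(1/Z)Z^{2}$. This is the integer-exponent logarithmic regime ($\alpha=2$), already encountered in the proof of \cref{lem:sing_M}, for which $[z^n]Z^{2}\ln(1/Z)\sim 2\,n^{-3}$ (the general value being $(-1)^{k}k!\,n^{-k-1}$ at $k=2$); this produces $[z^n]M(z,u)\sim 2s(u)\,\rho(u)^{-n}\,n^{-3}$.

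The only genuine subtlety, and the step I would be most careful about, is justifying that the $O$-remainders can themselves be transferred. This requires that each $O(Z^{2})$ or $O(Z)$ error in \cref{eq:Musub,eq:Mucrit,eq:Musuper} be a true big-$O$ holding uniformly on a $\Delta$-domain, not merely an estimate along the real axis; this uniformity is precisely what the $\Delta$-analytic construction of the expansions in the proof of \cref{prop:Mu} provides. With that in hand, the big-$O$ (and little-$o$) transfer of \cite[Chap.~VI]{flajolet2009analytic} shows that the remainder contributes a quantity that is $o$ of the main term in each regime, and the three estimates follow by summing the contributions term by term, the analytic parts being asymptotically negligible.
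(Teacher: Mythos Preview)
Your proposal is correct and takes essentially the same approach as the paper: both simply apply the transfer theorems of \cite[Chap.~VI]{flajolet2009analytic} to the three singular expansions supplied by \cref{prop:Mu}, using that $\rho(u)$ is the unique dominant singularity. You have merely spelled out in detail (the scaling rule, the values of $\Gamma(-1/2)$, the integer-exponent logarithmic case $[z^n]Z^2\ln(1/Z)\sim 2n^{-3}$, and the uniformity of the $O$-terms on the $\Delta$-domain) what the paper compresses into a single sentence.
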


\begin{remark}
Instead of analytic combinatorics methods, one can also obtain these results using probabilistic methods. In the subcritical case, they require having an estimate of the size of the largest block, which is provided in \cref{sec:size-blocks-boisees}.
\end{remark}

\section{Probabilistic study of tree-rooted maps}
The purpose of this section is to study the phase transition undergone by a random tree-rooted maps weighted by their number of $2$-connected blocks. Following~\cite{2Louigi,fleurat2023phase}, this study is based on a interpretation of the block tree as a Bienaymé--Galton--Watson process (\cref{sub:BGW}), to obtain asymptotic estimates on the size of the largest blocks (\cref{sec:size-blocks-boisees}), and scaling limit results in the critical and supercritical cases (\cref{sec:scalingLimit}).

\subsection{Definition of the probabilistic model and Bienaymé--Galton--Watson trees}\label{sub:BGW}
We consider the following probability distribution on the class $\mathcal{M}$ of tree-rooted maps, indexed by a parameter $u>0$: for any integer $n\geq 0$, we define
\begin{equation}\label{eq:defPrnu}
\prnu{\m} = \frac{u^{b(\m)}}{[z^n] M(z,u)}\quad\text{for any }\m \in \mathcal{M}_n.
\end{equation}
We denote by $\Mnu$ a tree-rooted map sampled from $\Pnu$, by $\Tnu$ the block tree associated to it, and by $(\B_v,v\in \Tnu)$ its corresponding decorations.

\medskip
For $n\in \mathbb{Z}_{\geq 0}$ and $\mu$ a probability distribution on $\N$, $GW(\mu, n)$ denotes the law of a Bienaymé--Galton--Watson tree with offspring distribution $\mu$ conditioned to have $n$ edges.
For $u>0$ and $y\in[0,\rho_B]$, let $\mu^{y,u}$ be the probability distribution on $\mathbb{Z}_{\geq 0}$ defined  by setting, for $j\geq 0$, 
\begin{equation}
\label{mu-boisee}
\mu^{y,u}(2j) := \frac{b_j y^j u^{{1}_{j\ne 0}}}{uB(y)+1-u},\qquad\text{ so that }\qquad
\E{\mu^{y,u}}=\frac{2uyB'(y)}{uB(y)+1-u}.
\end{equation}

Moreover, we set $\mu^{u} := \mu^{y(u),u}$, where $y(u)$ is defined in~\cref{prop:defUc}. Then, the following proposition is the tree-rooted analogue of \cite[Proposition 3.1]{fleurat2023phase} (itself an extension of \cite[Proposition 3.1]{2Louigi}).
\begin{proposition}
\label{th-trees-boisees}
For every $u>0$ and any $n\geq 0$, under $\Pnu$, the law of the tree of blocks $(\Tnu,(\B_v,v\in \Tnu))$ can be described as
follows.\begin{itemize}
\item $\Tnu$ follows the law $GW(\mu^u,2n)$;
\item Conditionally given $\Tnu=\t$,
the blocks $(\B_{v},v\in \t)$ are independent random variables, and, for $v\in\t$, $\B_v$ follows a uniform distribution on the set of blocks of size $k_v(\t)/2$, where $k_v(\t)$ is the number of children of $v$ in $\t$.
\end{itemize}
\end{proposition}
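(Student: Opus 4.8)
The plan is to compute directly the joint law of the decorated block tree under $\Pnu$ and to match it, term by term, with the law prescribed by the two items. The foundation is the bijection of \cref{prop-t-m-n-boisee}: a tree-rooted map $\m\in\cM_n$ is encoded by a pair $(t,(\b_v))$ where $t:=T_\m$ is a plane tree whose $2n$ edges are the half-edges of $\m$, each internal node $v$ carrying a block $\b_v$ of size $s_v$ and having exactly $2s_v$ children, and each leaf carrying the vertex map. In particular the number of blocks equals the number $I(t)$ of internal nodes of $t$. Hence, by the definition \cref{eq:defPrnu}, for any such decorated tree with $2n$ edges, writing $\m$ for the associated map,
\[
\prnu{(\Tnu,(\B_v))=(t,(\b_v))}=\frac{u^{b(\m)}}{[z^n]M(z,u)}=\frac{u^{I(t)}}{[z^n]M(z,u)}.
\]

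I would then compute the probability assigned to $(t,(\b_v))$ by the candidate description, namely the tree law $GW(\mu^u,2n)$ together with independent uniform decorations. Writing $y:=y(u)$ and $D:=uB(y)+1-u$, and using $b_0=1$ (the unique vertex map), $\sum_v s_v=n$, and the fact that a tree with $2n$ edges has $2n+1$ nodes, the unnormalised Galton--Watson weight of $t$ factorises as
\[
\prod_{v\in t}\mu^{u}\big(k_v(t)\big)=\frac{u^{I(t)}\,y^{n}\,\prod_{v}b_{s_v}}{D^{\,2n+1}}.
\]
The decisive point is that the factors $y^{n}$ and $D^{2n+1}$ depend only on the total size $2n$, which is fixed by the conditioning, hence cancel in $GW(\mu^u,2n)$; this is precisely why any $y\in(0,\rho_B]$ yields the same conditioned tree, the specific value $y(u)$ (which tunes the mean of $\mu^u$ through \cref{eq:yu}) being relevant only to the later asymptotic and scaling-limit analysis. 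Multiplying by the uniform decoration weight $\prod_v 1/b_{s_v}$ then cancels $\prod_v b_{s_v}$ as well, leaving
\[
\frac{u^{I(t)}}{\displaystyle\sum_{(t',(\b'_v))}u^{I(t')}},
\]
the sum ranging over all decorated trees with $2n$ edges.

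To conclude I would identify the normalising constant through the same bijection: each decorated tree $(t',(\b'_v))$ corresponds to a unique $\m'\in\cM_n$ with $b(\m')=I(t')$, so the denominator equals $\sum_{\m'\in\cM_n}u^{b(\m')}=[z^n]M(z,u)$. The two probabilities therefore coincide for every admissible $(t,(\b_v))$, which proves the equality of laws; moreover the product form of the matched expression directly exhibits the claimed conditional independence of the blocks and their uniformity over blocks of the prescribed size. I expect no serious obstacle here: once \cref{prop-t-m-n-boisee} is available, the argument is the Boltzmann-weight bookkeeping underlying any conditioned Galton--Watson encoding, the only genuine care being the node-count accounting and the convention $b_0=1$ that makes the leaves match the vertex-map decoration.
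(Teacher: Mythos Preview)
Your argument is correct and complete: the bijection of \cref{prop-t-m-n-boisee} reduces the claim to a factorisation identity, and your bookkeeping (the node count $2n+1$, the convention $b_0=1$, and the cancellation of $y^n/D^{2n+1}$ under the size conditioning) is accurate. The observation that the conditioned law does not depend on the particular $y\in(0,\rho_B]$ is also correct and worth recording.

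As for the comparison: the paper does \emph{not} supply a proof of this proposition. It merely states that it is the tree-rooted analogue of \cite[Proposition~3.1]{fleurat2023phase} (itself extending \cite[Proposition~3.1]{2Louigi}) and leaves the verification to the reader. Your write-up is exactly the standard Boltzmann/Galton--Watson computation that underlies those references, so you have in effect filled in what the paper omits, by the expected route.
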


Therefore the behavior of $\Tnu$ will be driven by the properties of $\mu^u$. It follows from \cref{asymptB-boisee} and \cref{prop:defUc} that $\mu^u$ exhibits the following phase transition:

\begin{lemma}
\label{tree-critical-boisee}
For any $u>0$, define
\begin{equation}\label{eq:cu}
 c(u) = \frac{4 \(3 \pi - 8\)^3}{9 \(4 - \pi \)^3} \frac{u}{(21 \pi - 64) u + 3 \pi}.
\end{equation}
Then, one has:
\begin{description}
\item[Subcritical case.] For $u < u_C$,
\begin{equation}\label{Eu-boisee}
E(u) := \E{\mu^{u}} = \frac{16 \(3 \pi - 8\) \(10 - 3 \pi \)}{3 \(4 - \pi \)} \frac{u}{(21 \pi - 64) u + 3 \pi}<1\quad\text{and}\quad\mu^{u}(\{2j\}) \overunderset{}{j\to\infty}{\sim} c(u) j^{-3};
\end{equation}
\item[Critical case.] For $u = u_C$,
\[\E{\mu^u} =1\qquad\text{and}\qquad\mu^{u_C}(\{2j\}) \overunderset{}{j\to\infty}{\sim} c(u_C)j^{-3}=\frac{\(3 \pi -8\)^{2}}{12 \(10 - 3 \pi \) \(4 -\pi \)^{2}} j^{-3} \simeq 0.40\, j^{-3};\]
\item[Supercritical case.] For $u > u_C$,
\[\E{\mu^u} =1\qquad\text{and}\qquad\mu^{u}(\{2j\}) \sim c(u) \left(\frac{y(u)}{\rho_B}\right)^j j^{-3},\]
where $y(u)<\rho_B$ so $\mu^u$ has exponential moments.
\end{description}
\end{lemma}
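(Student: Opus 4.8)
The plan is to derive all three cases directly from the definition \cref{mu-boisee} of $\mu^{y,u}$, substituting $y=y(u)$ and using the asymptotic estimate for $b_j$ from \cref{asymptB-boisee}. The structure of the argument is dictated by the value of $y(u)$: in the subcritical and critical cases $y(u)=\rho_B$ (by \cref{prop:defUc}), whereas in the supercritical case $y(u)<\rho_B$. First I would establish the tail estimate, which is the easiest part. From \cref{mu-boisee}, for $j\neq 0$ we have $\mu^u(\{2j\})=b_j\,y(u)^j\,u/(uB(y(u))+1-u)$. Plugging in $b_j\sim \frac{4(3\pi-8)^3}{27\pi(4-\pi)^3}\rho_B^{-n}n^{-3}$ (with $n$ replaced by $j$) from \cref{eq:bn} and collecting the constant prefactor gives exactly $c(u)(y(u)/\rho_B)^j j^{-3}$; in the sub/critical cases $y(u)/\rho_B=1$ and this reduces to $c(u)j^{-3}$, while in the supercritical case the factor $(y(u)/\rho_B)^j$ with $y(u)<\rho_B$ decays geometrically, giving exponential moments. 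I would verify that $c(u)$ as defined in \cref{eq:cu} matches the combination $\frac{4(3\pi-8)^3}{27\pi(4-\pi)^3}\cdot\frac{u}{uB(\rho_B)+1-u}\cdot\rho_B^{-1}\cdots$ after inserting the explicit value $B(\rho_B)=8-\frac{64}{3\pi}$ (the constant term $a_0$ from \cref{lem:sing_M}, equal to $M(1/16)$), so that $uB(\rho_B)+1-u=1+u(7-\frac{64}{3\pi})$, whose numerator simplifies to $(21\pi-64)u+3\pi$ up to the factor $3\pi$.

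Next I would treat the mean $E(u)=\E{\mu^u}$. Using the closed form $\E{\mu^{y,u}}=\frac{2uyB'(y)}{uB(y)+1-u}$ from \cref{mu-boisee}, this is precisely the left-hand side of \cref{eq:yu} evaluated at $y=y(u)$. For $u\geq u_C$, by the very definition of $y(u)$ in \cref{prop:defUc}, equation \cref{eq:yu} holds, so $E(u)=1$; this covers both the critical and supercritical cases instantly. For $u<u_C$, \cref{prop:defUc} tells us \cref{eq:yu} has no solution in $[0,\rho_B]$, and since the continuous function $y\mapsto\frac{2uyB'(y)}{uB(y)+1-u}$ vanishes at $0$ and never reaches $1$ on $[0,\rho_B]$, it stays strictly below $1$ throughout; in particular $E(u)=\frac{2u\rho_B B'(\rho_B)}{uB(\rho_B)+1-u}<1$. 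To obtain the explicit formula \cref{Eu-boisee} I would substitute $B(\rho_B)=8-\frac{64}{3\pi}$ and $B'(\rho_B)=\frac{8(10-3\pi)(3\pi-8)}{9\pi(4-\pi)}$, read off from the coefficient of $Y$ in the singular expansion of \cref{prop:dvp-B-boisee} via $B'(\rho_B)=(\text{coeff of }Y)\cdot(-1/\rho_B)$ — here one must be careful with the chain rule, since $Y=1-y/\rho_B$ so $\frac{dB}{dy}=-\frac{1}{\rho_B}\frac{dB}{dY}$, giving $B'(\rho_B)=\frac{1}{\rho_B}\cdot\frac{8(10-3\pi)(3\pi-8)}{9\pi(4-\pi)}$, and the factor $\rho_B$ cancels against the $\rho_B$ in the numerator $2u\rho_B B'(\rho_B)$.

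The only genuine subtlety, and the main obstacle, is the bookkeeping of constants: confirming that the algebraic simplifications collapse to exactly the stated expressions for $E(u)$, $c(u)$, and the critical numerical values $c(u_C)\simeq 0.40$ and the displayed closed form $\frac{(3\pi-8)^2}{12(10-3\pi)(4-\pi)^2}$. This requires inserting the transcendental value $\rho_B=\frac{4(3\pi-8)^2}{9\pi^2}$ from \cref{eq:rho-b} and the explicit $u_C=\frac{9\pi(4-\pi)}{420\pi-81\pi^2-512}$ from \cref{eq:uC}, then checking that denominators such as $(21\pi-64)u+3\pi$ are indeed proportional to $uB(\rho_B)+1-u$. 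I expect these to be routine but error-prone rational-function manipulations best verified with a computer algebra system; the conceptual content — that the phase transition of $\mu^u$ is governed entirely by whether \cref{eq:yu} is solvable, which is exactly the dichotomy encoded by $u_C$ — is already fully supplied by \cref{prop:defUc,asymptB-boisee}. No convexity or monotonicity argument beyond the elementary observation that $E(u)<1$ on the subcritical range is needed, since the equality $E(u)=1$ for $u\geq u_C$ is immediate from the defining equation of $y(u)$.
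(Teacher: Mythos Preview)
Your proposal is correct and matches the paper's own approach exactly: the paper offers no detailed proof, stating only that the lemma ``follows from \cref{asymptB-boisee} and \cref{prop:defUc}'', and you have filled in precisely those computations. One small caveat: in the supercritical case the normalizing denominator in \cref{mu-boisee} is $uB(y(u))+1-u$ rather than $uB(\rho_B)+1-u$, so the leading constant is not literally the $c(u)$ of \cref{eq:cu}; this is arguably an imprecision in the lemma's own statement, and the exponential-decay conclusion you draw is unaffected.
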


\subsection{Phase transition for the sizes of the largest blocks.}
\label{sec:size-blocks-boisees}
This section puts into light a phase transition for the block sizes of random tree-rooted maps drawn according to $\Pnu$. We use probabilistic techniques to obtain the results, but an analysis using a saddle-point method  could also be carried out.

For $\m$ a tree-rooted map, denote by $\LB_1 (\m) \geq \dots \geq \LB_{b(\m)} (\m)$ the sizes of its blocks in decreasing order. By convention, we set $\LB_k(\m) = 0$ if $k > b(\m)$. For a random variable $X_n$ and a positive sequence $a_n$, recall that $X_n=O_\mathbb{P}(a_n)$ (resp. $X_n=\Theta_\mathbb{P}(a_n)$) means that $\(\mathbb{P}(X_n\leq a_n u_n)\)_{n\geq 0}$ (resp. $\(\mathbb{P}(a_n/u_n\leq X_n\leq a_nu_n)\)_{n\geq 0}$) tends to $1$ for any positive $u_n$ tending to $+\infty$.

\begin{theorem} The random tree-rooted map $\Mnu$, drawn according to $\Pnu$, exhibits the following behaviours when $n$ tends to infinity.
\begin{description}
    \item[Subcritical case.]
For $u<u_c$, the largest bloc is macroscopic, and more precisely one has:
\begin{equation}\label{eq:bloc1SsCritique}
\frac{\LB_{1}(\Mnu) - (1-E(u))n}{\sqrt{c(u) n \ln(n)}} \xrightarrow[n\to\infty]{(d)} \mathcal{N}(0,1).
\end{equation}
Furthermore, for any fixed $j\geq 2$, it holds that $\LB_j(\Mnu) = \Theta_{\mathbb{P}}(n^{1/2})$ and for $x>0$:
\begin{equation}\label{eq:bloc2SsCritique}
\pr{\LB_{j}(\Mnu) \leq x\sqrt{n}} \xrightarrow[n\to\infty]{} e^{-\lambda(x)} \sum_{p=0}^{j-2} \frac{\lambda(x)^p}{p!}, \qquad \text{ where }\lambda(x) := \frac{c(u)}{2x^2}.
\end{equation}

 \item[Critical case.] For $u = u_C$, for any fixed $j\geq 1$, it holds that $\LB_j(\Mnu) = \Theta_{\mathbb{P}}(n^{1/2})$. More precisely, up to a shift of indices, the sizes of the blocks exhibit a similar behavior as the sizes of non-macroscopic blocks in the subcritical regime, namely, for $x>0$:
 \begin{equation}\label{eq:blocCritique}
\pr{\LB_{j}(\Mnu) \leq x\sqrt{n}} \xrightarrow[n\to\infty]{} e^{-\lambda(x)} \sum_{p=0}^{j-1} \frac{\lambda(x)^p}{p!}, \qquad \text{ where }\lambda(x) := \frac{c(u_C)}{2x^2}.
\end{equation}
 \item[Supercritical case.] For $u > u_C$, for all fixed $j \geq 1$, it holds as $n\rightarrow\infty$ that
\[\LB_{j}(\Mnu) = \frac{\ln(n)}{\ln\(\frac{\rho_B}{y(u)}\)} - \frac{3\ln(\ln(n))}{\ln\(\frac{\rho_B}{y(u)}\)}+O_{\mathbb{P}}(1).\]
\end{description}
\end{theorem}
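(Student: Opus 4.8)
The plan is to push everything onto the block tree. By \cref{th-trees-boisees}, an internal node of $\Tnu$ with $r$ children encodes a block of size $r/2$, so the ordered block sizes $\LB_1(\Mnu)\ge\LB_2(\Mnu)\ge\cdots$ are precisely one half of the ordered out-degrees of $\Tnu\sim GW(\mu^u,2n)$. I would then invoke the classical identity that the unordered out-degree sequence of a Galton--Watson tree conditioned to have $2n$ edges has the same law as $(\xi_1,\dots,\xi_{2n+1})$, i.i.d.\ of law $\mu^u$ and conditioned on $\xi_1+\cdots+\xi_{2n+1}=2n$. This turns the theorem into a statement about the extreme order statistics of an i.i.d.\ sample of $\mu^u$ conditioned on its sum, the only inputs being the mean $E(u)$ and the tail $\mu^u(\{2j\})\sim c(u)j^{-3}$ from \cref{tree-critical-boisee}. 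The effect of the conditioning $\{\sum_i\xi_i=2n\}$ on extremal events is removed throughout by a local limit theorem for $\pr{\sum_i\xi_i=2n}$.

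\textbf{Subcritical case.} Since $E(u)<1$, a free sum of $2n+1$ copies of $\mu^u$ is about $2nE(u)<2n$, so the conditioning forces an excess of order $2n(1-E(u))$. As $\mu^u$ is heavy-tailed, this excess is realised by a single big jump: with probability tending to one there is a unique index $i^{\star}$ with $\xi_{i^{\star}}$ of order $n$, the other variables forming a nearly unconditioned sample. Because $\sum_i\xi_i=2n$, this gives exactly $\LB_1(\Mnu)=n-\tfrac12\sum_{i\neq i^{\star}}\xi_i$, and the fluctuations of $\LB_1$ are those of the bulk sum $\sum_{i\neq i^{\star}}\xi_i$. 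The second largest degree being only of order $\sqrt n$ (see below), the bulk is effectively truncated at level $\Theta(\sqrt n)$; as $\mu^u$ sits at the borderline exponent $3$, the truncated single-variable variance grows like $4c(u)\ln\sqrt n=2c(u)\ln n$, so the bulk sum has variance $\approx 4c(u)\,n\ln n$ and, after halving, $\LB_1$ satisfies the Gaussian limit \eqref{eq:bloc1SsCritique} at scale $\sqrt{c(u)\,n\ln n}$. The smaller blocks are the top order statistics of the bulk: the number of bulk variables exceeding $2x\sqrt n$ converges to a Poisson variable whose parameter is read off from the tail of $\mu^u$, and since $\LB_j(\Mnu)\le x\sqrt n$ means at most $j-2$ \emph{non-giant} blocks lie above $x\sqrt n$, one gets the law \eqref{eq:bloc2SsCritique} with $\lambda(x)=c(u)/(2x^2)$; in particular $\LB_j(\Mnu)=\Theta_{\mathbb{P}}(\sqrt n)$ for each fixed $j\ge 2$.

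\textbf{Critical and supercritical cases.} At $u=u_C$ one has $E(u)=1$, so the sum already sits at its natural value and no condensation occurs: every large degree comes from the bulk and the same Poisson heuristic applies, except that the largest block is now itself of order $\sqrt n$. Hence $\LB_j(\Mnu)\le x\sqrt n$ means at most $j-1$ blocks exceed $x\sqrt n$, giving \eqref{eq:blocCritique} with the index shifted by one and $\lambda(x)=c(u_C)/(2x^2)$. For $u>u_C$ one still has $E(u)=1$, but now $y(u)<\rho_B$ and $\mu^u$ is light-tailed, $\mu^u(\{2j\})\sim c(u)(y(u)/\rho_B)^j j^{-3}$. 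The degrees concentrate, and the largest block is located by the standard extreme-value balance: the expected number of vertices of out-degree at least $2J$ is of order $n\,(y(u)/\rho_B)^{J}J^{-3}$, and setting this to be of order one and solving for $J$ yields $J=\ln n/\ln(\rho_B/y(u))-3\ln\ln n/\ln(\rho_B/y(u))+O_{\mathbb{P}}(1)$, which is the announced two-term expansion, valid for each fixed $j$ (the top order statistics differing only at order $O_{\mathbb{P}}(1)$).

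\textbf{Main obstacle.} The extreme-value estimate (supercritical) and the Poisson convergence of the smaller blocks (all regimes) are the routine parts, following from the i.i.d.\ reduction together with the local limit theorem that absorbs the sum-conditioning. The delicate point is the subcritical Gaussian limit \eqref{eq:bloc1SsCritique} with its exact variance: it is a condensation central limit theorem precisely at the \emph{critical tail exponent} $3$, where the single-variable variance diverges logarithmically, so the normalisation $\sqrt{c(u)\,n\ln n}$ appears only after truncating the bulk at the $\Theta(\sqrt n)$ scale of the second-largest block and decoupling the giant from the bulk fluctuations. I would treat this by adapting the condensation estimates for non-generic Galton--Watson trees used in \cref{th-trees-boisees}'s sources to this borderline regime, verifying that the logarithmic correction to the truncated variance is exactly what surfaces in the limiting constant.
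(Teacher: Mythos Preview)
Your approach is correct and coincides with the paper's: both reduce to the out-degree statistics of $\Tnu\sim GW(\mu^u,2n)$ via \cref{th-trees-boisees}, then analyse the conditioned i.i.d.\ sample in each regime. The paper simply invokes black-box results---Janson's survey (Theorem~19.16, Example~19.29, Theorem~19.34) for the supercritical, critical, and subcritical $j\ge 2$ cases respectively, and Kortchemski's condensation CLT for the subcritical largest block---whereas you sketch the one-big-jump, Poisson, and extreme-value heuristics that those theorems encapsulate, correctly flagging the borderline-exponent Gaussian \eqref{eq:bloc1SsCritique} as the only non-routine step.
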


\begin{proof}
In all three cases, we make extensive use of Janson's survey \cite{JansonSurvey}. In the supercritical case, one can proceed as in the non--tree-rooted case \cite{fleurat2023phase} and use the survey's Theorem 19.16. In the critical case, its Example 19.29 can be applied. The subcritical case is a bit more involved, Janson's Theorem 19.34 can be applied for the size of the $j$-th largest block for $j\geq 2$. Results from Kortchemski \cite[Theorem~1]{kortchemskiLimitTheoremsConditioned2015} allow to conclude for the largest block.
\end{proof}

\begin{remark}
One can get a local limit theorem for $\LB_1(\Mnu)$ in the subcritical case as in~\cite{Stu20} (up to the technicality that nodes of the block tree have only even numbers of children). Furthermore, one can state a joint limit law for the sizes $\LB_j(\Mnu)$. For any fixed $r\geq 1$,
\[\(\frac{c(u)}{2} \(\frac{\LB_j(\Mnu)}{\sqrt{n}}\)^{-2},\ 2\leq j \leq r+1\) \xrightarrow[n\to\infty]{(d)} (A_1, \dots, A_{r}),\]
where the $A_i$ are the decreasingly ordered atoms of a Poisson Point Process of rate $1$ on $\mathbb{R}_+$. The same joint limit law holds at $u_C$ (with $j$ from $1$ to $r$). 
\end{remark}

\begin{remark}
In contrast to the case of non--tree-rooted maps \cite{fleurat2023phase}, here in the subcritical case $u<u_C$ the size of the second block is negligible compared to the order of fluctuation of the size of largest block.
Moreover, for $u<u_C$ and fixed $j\geq1$, the $j$-th largest critical block has the same limit law (up to constant rescaling) as the $j+1$-th largest block in the subcritical regime, which did not hold in the non--tree-rooted case.
 Informally, the conditioning that a random walk (subcritical case) is an excursion (critical case) has negligible effect on the law of the largest steps, so subcritical blocks (for $j\geq 2$) behave like critical blocks.
\end{remark}

\subsection{Scaling limit in the critical and supercritical cases}\label{sec:scalingLimit}
In the critical and the supercritical cases, we can establish the following convergence result: 
\begin{theorem}\label{thm:scalingLimit}
For any fixed $u\geq u_c$, there exist some constants $\alpha_u$, $\beta_u$ and $\gamma_u$ such that:
\begin{itemize}
\item If $u>u_c$, it holds that:
\begin{equation}
 \frac{\gamma_u}{\sqrt{n}}\cdot\left(\M_n^{(u)},\STnu,\tree_n^{(u)}\right)\xrightarrow[n\rightarrow\infty]{(d)}(\alpha_u\cdot \mathcal{T}_{\mathrm{e}},\beta_u \cdot \mathcal{T}_{\mathrm{e}},\mathcal{T}_{\mathrm{e}});
\end{equation}

\item If $u=u_c$, it holds that:
\begin{equation}
 \frac{\gamma_{u_C}\sqrt{\log(n)}}{\sqrt{n}}
 \left(\M_n^{(u_C)},\tau(\M_n^{(u_C)}),\tree_n^{(u_C)}\right)\xrightarrow[n\rightarrow\infty]{(d)}(\alpha_{u_C}\cdot\mathcal{T}_{\mathrm{e}},\beta_{u_C}\cdot\mathcal{T}_{\mathrm{e}},\mathcal{T}_{\mathrm{e}});
\end{equation}
\end{itemize}
where, in both cases, each $\mathcal{T}_{\mathrm{e}}$ is a copy of the same realization of the Brownian Continuum Random Tree (CRT), and the convergence holds in the Gromov-Hausdorff-Prokhorov sense.
\end{theorem}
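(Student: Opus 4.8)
The plan is to reduce everything to the block tree $\Tnu$, which by \cref{th-trees-boisees} has law $GW(\mu^u,2n)$, establish its convergence to the CRT by the appropriate invariance principle for conditioned Bienaymé–Galton–Watson trees, and then transfer this convergence to the spanning tree $\STnu$ and to the map $\Mnu$ by showing that, along geodesics, the per-block contributions to distances concentrate. This forces all three rescaled metric spaces to converge to the \emph{same} realisation of $\mathcal{T}_{\mathrm{e}}$, the homothety ratios between them producing the constants $\alpha_u,\beta_u,\gamma_u$.

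First I would treat the block tree. The relevant behaviour is dictated by \cref{tree-critical-boisee}. For $u>u_C$, the offspring law $\mu^u$ is critical with exponential moments, hence has finite variance $\sigma_u^2$, and Aldous' invariance principle for conditioned trees (see also \cite[Th. 6.63]{StuflerSurvey}) gives that $\frac{\sigma_u}{2\sqrt{2n}}\cdot\Tnu$ converges to $\mathcal{T}_{\mathrm{e}}$ in the Gromov–Hausdorff–Prokhorov (GHP) sense, which is the scaling of order $\gamma_u/\sqrt{n}$. For $u=u_C$, the law is critical but has infinite variance with $\mu^{u_C}(\{2j\})\sim c(u_C)j^{-3}$; it therefore lies in the domain of attraction of a Gaussian with a slowly varying truncated variance, $\sum_{2j\le K}(2j)^2\mu^{u_C}(\{2j\})\asymp\log K$. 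The associated norming sequence satisfies $b_{2n}\asymp\sqrt{n\log n}$, so the height is of order $2n/b_{2n}\asymp\sqrt{n/\log n}$; by the invariance principle for conditioned GW trees in this regime (in the spirit of \cite{kortchemskiLimitTheoremsConditioned2015}), $\frac{b_{2n}}{2n}\cdot\Tnu$ converges to $\mathcal{T}_{\mathrm{e}}$, with $b_{2n}/(2n)\asymp\sqrt{\log n/n}$, exactly the claimed critical scaling. The convergence of the mass measures (the Prokhorov part) follows in both cases from the convergence of the height/contour processes.

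Next I would transfer this to $\Mnu$ and $\STnu$. By \cref{prop-t-m-n-boisee} and \cref{th-trees-boisees}, both are obtained from $\Tnu$ by inflating each internal node $v$ into its decoration $\B_v$ (into the spanning tree $\tau(\B_v)$ for $\STnu$). A geodesic in $\Mnu$ (resp. $\STnu$) projects onto a path of $\Tnu$ crossing a sequence of blocks, and its length equals the sum of the geodesic distances traversed inside the successive blocks. Since the bulk of the blocks along such a path are of bounded size and contribute $O(1)$ each, and since the per-block contributions are integrable under the size-biased law induced along paths (integrability being guaranteed by the $j^{-3}$ tail of $\mu^u$ together with sublinearity of within-block distances), a law-of-large-numbers argument along block-tree paths shows that these distances are asymptotically deterministic multiples of the block-tree distance. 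These multiples pin down the ratios $\alpha_u/\gamma_u$ and $\beta_u/\gamma_u$ and force convergence to the same $\mathcal{T}_{\mathrm{e}}$. To upgrade this to GHP convergence it suffices to bound $\max_v\diam(\B_v)$: if the maximal block diameter is negligible with respect to the diameter scale, the inflation of $\Tnu$ into $\Mnu$ and $\STnu$ perturbs the rescaled metric by a vanishing amount, and the uniform measure pushes forward to a measure close to the mass measure on $\Tnu$.

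The main obstacle is precisely this diameter control, and specifically in the critical case. For $u>u_C$ the largest block has size $O_{\mathbb{P}}(\log n)$, so trivially $\max_v\diam(\B_v)=o_{\mathbb{P}}(\sqrt{n})$ and the supercritical statement also follows from \cite[Th. 6.63]{StuflerSurvey}. At $u=u_C$, however, the largest blocks have size $\Theta_{\mathbb{P}}(\sqrt{n})$, which exceeds the diameter scale $\sqrt{n/\log n}$ by a factor $\sqrt{\log n}$, so the trivial bound $\diam(\B_v)\le|\B_v|$ is \emph{not} enough. One must establish a sublinear diameter bound, e.g. $\diam(\B_v)=|\B_v|^{1/2+o(1)}$, with enough uniformity over the $\Theta(n)$ blocks to conclude that $\max_v\diam(\B_v)=o_{\mathbb{P}}(\sqrt{n/\log n})$. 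Proving such a bound for uniform $2$-connected tree-rooted maps of prescribed size, together with the concentration of the per-block contributions along geodesics, is the technical heart of the critical case.
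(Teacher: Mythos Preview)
Your approach is essentially the one the paper takes: reduce to the block tree via \cref{th-trees-boisees}, apply the invariance principle for critical conditioned Bienaymé--Galton--Watson trees (finite variance for $u>u_C$, Gaussian domain of attraction with slowly varying truncated variance for $u=u_C$), and then transfer to $\Mnu$ and $\STnu$ by showing that distances are asymptotically linear in the block-tree distance, the only nontrivial input being a diameter bound on uniform $2$-connected tree-rooted blocks. The paper's proof is likewise a reduction to \cite{StuflerSurvey,fleurat2023phase} plus this one new ingredient, stated as \cref{lem:diameter}: for $\Bn$ uniform in $\cB_n$, the probabilities $\mathbb{P}(\diam(\tau(\Bn))\geq n^{1/2+\varepsilon})$ are stretched-exponential, which is exactly the sublinear control you flag as the technical heart.

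Where your proposal stops short is in supplying that bound. The paper's argument is worth knowing because it is short and not the generic one: since $\diam(\Bn)\leq\diam(\tau(\Bn))$, it suffices to control the spanning-tree diameter; and for the \emph{general} tree-rooted map $\Mn$, Mullin's bijection identifies the height of $\tau(\Mn)$ with the maximal abscissa of a simple random walk in $\mathbb{Z}^2$ of length $2n$ conditioned to stay in the quadrant and return to the origin. A Chernoff bound on the unconditioned walk, together with the fact that the conditioning event has probability $\Theta(n^{-3})$, gives the stretched-exponential tail for $\Mn$. One then transfers the bound from $\Mn$ to $\Bn$ by using the subcritical giant-block result: in $\M_{\lfloor n/\alpha\rfloor}$ (with $\alpha=1-E(1)$) the largest block has size exactly $n$ with probability $\Theta(1/\sqrt{n\ln n})$, and conditionally on this it is distributed as $\Bn$, so the stretched-exponential tail survives. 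This last transfer step is the piece your sketch does not anticipate.
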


Convergence towards the CRT in the supercritical case was previously obtained by Stufler~\cite[Theorem~6.63]{StuflerSurvey}, who considers a framework where a block-weighted map is sampled, and \emph{afterwards} one of its spanning trees is uniformly sampled. Block-weighted models of random tree-rooted maps fall in this model upon tweaking the weights \cite[Remark~6.65]{StuflerSurvey}. Our contribution lies in showing scaling limit in the critical case (and having a unified proof for both the supercritical and critical cases), and finding the value of $u_C$.

\begin{proof}[Proof of \cref{thm:scalingLimit}]
The convergence of the sequence of tree of blocks $(\tree_n^{(u)})$ follows from classical results about the scaling limit of Bienaymé--Galton--Watson trees towards the CRT~\cite{LeGallTrees,DuquesneLeGall}, with an offspring distribution with a finite second moment in the $u>u_c$ case or with finite moments of order $2+\varepsilon$ in the critical case. The values of the constant $\gamma_u$ follows from general results. In the case $u>u_c$, $\gamma_u=\sigma_u/2$, where $\sigma_u$ is the standard deviation of $\mu^u$, given explicitely by
\[
\sigma_u^2 = 1 + 2y(u) \frac{B''(y(u))}{B'(y(u))}.
\]
For $u=u_C$, the variance of $\mu^{u_C}$ is infinite, and we get $\gamma_{u_c}=\sqrt{2c(u_C)}$ (see \emph{e.g.}~\cite[Ex.~7.10]{JansonStable}).

To establish the scaling limit of $\Mnu$ and of $\STnu$, we proceed as in~\cite{StuflerSurvey,fleurat2023phase} and prove that the distances in the map and in the spanning tree are~--~up to a linear factor~--~equivalent in the limit to the distances in the block tree. The proof extends effortlessly, and only requires (for the critical case) to have a control on the diameter of the $2$-connected blocks and of their spanning trees, that we state in~\cref{lem:diameter}. The Prokhorov part of the convergence can be established along the same lines as~\cite[Lemma~5.13]{fleurat2023phase}.
\end{proof}

A sequence $(p_n)_{n\geq 0}$ of nonnegative real numbers is called \emph{stretched-exponential} if there is $\gamma>0$ such that $p_n\leq \exp(-n^{\gamma})$ for $n$ large enough.
\begin{lemma}[Bound on the diameter of random $2$-connected tree-rooted maps]\label{lem:diameter}
Let $B_n$ be a uniformly random $2$-connected tree-rooted map of size $n$, and recall that $\tau(B_n)$ denotes its distinguished spanning tree.
Then, for any $\varepsilon>0$, the sequences $\(\mathbb{P}(\diam(\Bn) \geq n^{1/2+\varepsilon})\)_{n\geq 0}$ and $\(\mathbb{P}(\diam(\tau(B_n)) \geq n^{1/2+\varepsilon}\)_{n\geq 0}$ are stretched-exponential\footnote{To prove \cref{thm:scalingLimit}, it is actually enough to have the result for an exponent strictly smaller than $1$.}.
\end{lemma}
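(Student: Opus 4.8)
### Proof proposal

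The plan is to bound the diameter of $\Bn$ and of $\tau(\Bn)$ by controlling the diameter of a \emph{dual-type} object to which $\Bn$ is naturally associated. The starting point is the observation already recorded in the introduction: a tree-rooted map with $n$ edges is a shuffle of its spanning tree $\tau$ and the dual spanning tree $\tau^*$, with $|\tau|+|\tau^*|=n$. Since $\diam(\tau(\Bn))\leq\diam(\tau)$ where $\tau$ is a plane tree with at most $n$ edges, and since $\diam(\Bn)$ is controlled by the diameters of $\tau$ and $\tau^*$ together (any edge of $\Bn$ is an edge of $\tau$ or of $\tau^*$, so a geodesic in $\Bn$ can be compared to paths alternating between the two trees), it suffices to show that a uniformly random plane tree with $k\leq n$ edges has diameter exceeding $n^{1/2+\varepsilon}$ only with stretched-exponential probability. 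For a single \emph{uniform} plane tree this is classical: a uniform plane tree with $k$ edges is $GW(\mu,k)$ for the geometric offspring law $\mu$, which has exponential moments, and for such conditioned Bienaymé--Galton--Watson trees the height (hence diameter) has Gaussian-type upper tails, giving $\mathbb{P}(\diam \geq k^{1/2+\varepsilon})\leq\exp(-c\,k^{2\varepsilon})$; see the height-tail estimates in Kortchemski's work already cited as~\cite{kortchemskiLimitTheoremsConditioned2015} and in Janson's survey~\cite{JansonSurvey}.

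The subtlety is that in the \emph{block} setting we are not sampling a free tree-rooted map but a uniformly random \emph{$2$-connected} one, and the spanning tree $\tau(\Bn)$ is not uniform among plane trees. First I would therefore reduce to a uniform-tree statement via a counting/absolute-continuity argument: I would bound $\mathbb{P}(\diam(\tau(\Bn))\geq n^{1/2+\varepsilon})$ by the ratio of (number of $2$-connected tree-rooted maps of size $n$ whose spanning tree has large diameter) to $b_n$. Using Mullin's shuffle description, the pair $(\tau,\tau^*)$ ranges over pairs of plane trees of complementary sizes subject to a shuffling constraint, and the event $\{\diam(\tau)\geq n^{1/2+\varepsilon}\}$ is an event on $\tau$ alone; combined with the polynomial lower bound $b_n\sim C\rho_B^{-n}n^{-3}$ from~\cref{asymptB-boisee}, a stretched-exponential bound on the numerator (coming from the tree-height tail) survives division by the polynomially small $b_n$. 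The same shuffle picture gives $\diam(\Bn)\leq \diam(\tau)+\diam(\tau^*)$ up to a constant factor, so the map diameter bound follows from the two tree bounds.

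Concretely, the steps in order are: (i) record the shuffle bijection and deduce $\diam(\Bn)=O(\diam(\tau)+\diam(\tau^*))$ and $\diam(\tau(\Bn))\le\diam(\tau)$; (ii) write the probability of the large-diameter event as a counting ratio over $b_n$ and reduce, via the product structure of the shuffle, to a tail bound for the height of a uniform plane tree of size at most $n$; (iii) invoke the Gaussian-type height tail for conditioned Bienaymé--Galton--Watson trees with finite exponential moments to get $\mathbb{P}(\text{height}\geq m^{1/2+\varepsilon})\leq\exp(-c\,m^{2\varepsilon})$ uniformly in the size $m\leq n$; (iv) sum over the $O(n)$ possible sizes of $\tau$ and $\tau^*$, absorbing the polynomial factors $n$ and $n^3$ from~\cref{asymptB-boisee} into the stretched exponential, which remains stretched-exponential after adjusting $\gamma$.

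The main obstacle I anticipate is step~(ii): controlling the interaction between the $2$-connectedness constraint and the diameter, since the shuffle of $\tau$ and $\tau^*$ that produces a $2$-connected map is not a free product and the two trees are not independent. I expect to handle this by bounding above, ignoring the $2$-connectedness constraint entirely (every $2$-connected tree-rooted map of size $n$ is in particular a tree-rooted map of size $n$, so its count is at most $m_n=\Cat_n\Cat_{n+1}$, which is still only polynomially larger than $b_n\cdot\rho_B^{-n}\cdots$ after accounting for the exponential gap $\rho_B^{-1}>16$). This exponential gap is in fact favourable: since $\rho_B^{-1}\approx 11 < 16$, bounding $b_n$ below by the true asymptotics and the bad-event count above by a fraction of $m_n$ with the smaller growth rate $16^n$ could fail, so the correct route is to keep everything inside the $2$-connected class and use the block-tree description of~\cref{th-trees-boisees} directly: under the uniform measure on $\cB_n$, the decorating tree is governed by a single large block, and the height tail should be read off the offspring law's exponential moments. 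I would then lean on the conditioned-tree height estimates of~\cite{kortchemskiLimitTheoremsConditioned2015,JansonSurvey} as the technical engine, which is precisely the step where the stretched-exponential (rather than merely polynomial) decay is extracted.
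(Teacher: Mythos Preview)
Your proposal correctly identifies the central difficulty but does not resolve it, and the final fallback you suggest is circular.

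The first reduction you give, $\diam(\Bn)\leq\diam(\tau(\Bn))$, is fine and is exactly what the paper does. However, your route for bounding $\diam(\tau(\Bn))$ breaks down. Note that $\tau(\Bn)$ is \emph{not} a uniform plane tree: it is the spanning tree of a uniform $2$-connected tree-rooted map, and the $2$-connectedness constraint biases the distribution. You recognise this, and you also correctly diagnose why the naive counting fix fails: since $\rho_B^{-1}\approx 11<16=\rho_M^{-1}$, the ratio $m_n/b_n$ grows exponentially, so bounding the numerator of your probability by an $m_n$-type count and dividing by $b_n$ destroys any stretched-exponential bound. Your proposed remedy~--~``keep everything inside the $2$-connected class and use the block-tree description of~\cref{th-trees-boisees}''~--~is vacuous here: the block tree of a $2$-connected map has a single internal node, so that description gives no nontrivial decomposition of $\Bn$ and cannot supply the missing tail bound. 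The vague appeal to conditioned-tree height estimates at the end does not close the gap either, because you have not exhibited $\tau(\Bn)$ as a conditioned Bienaymé--Galton--Watson tree (it is not one).

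The paper's argument supplies the missing idea by reversing the direction of the comparison. It first establishes the stretched-exponential bound for $\diam(\tau(\Mn))$ with $\Mn$ a uniform (not $2$-connected) tree-rooted map, via Mullin's bijection: the height of $\tau(\Mn)$ is the maximal abscissa of a quadrant-confined lattice walk, for which Chernoff plus a union bound gives the tail, and the conditioning costs only a polynomial factor $\Theta(n^{3})$. It then \emph{embeds} $\Bn$ inside a larger uniform object: with $\alpha=1-E(1)$, the largest block of $\mathrm{M}_{\lfloor n/\alpha\rfloor}$ has size exactly $n$ with probability $\Theta\big((n\ln n)^{-1/2}\big)$ by the subcritical local limit result, and on that event this block is distributed as $\Bn$. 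Since $\tau(\Bn)$ is then a subtree of $\tau(\mathrm{M}_{\lfloor n/\alpha\rfloor})$, its diameter is bounded by that of the larger tree; the polynomial cost $\sqrt{n\ln n}$ of the conditioning is absorbed by the stretched exponential. This embedding trick is precisely what circumvents the exponential gap you ran into.
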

\begin{proof}
Note first that it is enough to establish the result for $\diam(\tau(B_n))$ since $\diam(\Bn)\leq \diam(\tau(B_n))$ deterministically.

By Mullin's bijection~\cite{Mullin67}, for $\Mn$ a uniform element of $\cM_n$, the height of $\tau(\Mn)$ is distributed as the maximal abscissa $X_n$ in a random walk (with steps in $\{W,E,S,N\}$) of length $2n$, ending at the origin and staying in the right-hand upper quadrant.
It is easy to establish, \emph{e.g.} using Chernoff's bound and a union bound, that the maximal abscissa $\widetilde{X_n}$ in a random walk of length $2n$ in $\mathbb{Z}^2$ is such that, for any $\varepsilon>0$, the sequence $\Big(\mathbb{P}(\widetilde{X_n}\geq n^{1/2+\varepsilon})\Big)_{n\geq 0}$ is stretched-exponential.

Since the random walk has probability $\Theta(n^{-3})$ to end at the origin and to stay in the quadrant,
 the sequence $\(\mathbb{P}(X_n\geq n^{1/2+\varepsilon})\)_{n\geq 0}$ is also stretched-exponential. And so is the sequence $\(\mathbb{P}(\diam(\tau(\Mn))\geq n^{1/2+\varepsilon})\)_{n\geq 0}$ since the diameter of a tree is at most twice its height. Let $\alpha=1-E(1)$. By the results of the previous section, one gets
\[
\mathbb{P}(\LB_{1}(\M_{\lfloor n/\alpha\rfloor})=n)=\Theta(1/\sqrt{n\ln(n)}),
\]
and in that case a block of maximal size is distributed as $\Bn$. Hence, the sequence $\(\mathbb{P}(\diam(\tau(\Bn))\geq n^{1/2+\varepsilon})\)_{n\geq 0}$ is also stretched-exponential.
\end{proof}

\section{Perspectives}

It has been recently shown~\cite{ding2020fractal,gwynne2020mating,gwynne2019bounds} that for $\Mn$ a random tree-rooted map of size $n$, the volume-growth exponent (whose inverse should give the exponent for the order of magnitude of the diameter) is with high probability in the interval $[3.550408,3.63299]$. It would be interesting to verify whether these bounds also hold for the random $2$-connected tree-rooted map $\B_n$, and more generally for the random tree-rooted map $\Mnu$ in the subcritical regime.

Regarding extensions of the model, one could consider maps endowed with a spanning forest, with weight $v$ per tree in the forest, which were studied by Bousquet-Mélou and Courtiel~\cite{bousquet2015spanning}, and one could additionally have a weight $u>0$ per $2$-connected block. They showed that, for $v>0$, one gets the asymptotic behaviour $n^{-5/2}$ as in pure maps~\cite{bousquet2015spanning}. The phase transition should thus be of the same nature than for the non--tree-rooted case~\cite{fleurat2023phase}, and we expect the scaling limit to be a stable tree of parameter $3/2$ at the critical weight $u_C(v)$. The case $v=0$ corresponds to tree-rooted maps as studied here. Interestingly, their model still has a combinatorial interpretation for $v\in[-1,0)$, with asymptotic behaviour $n^{-3}\ln(n)^{-2}$~\cite{bousquet2015spanning}. From this behaviour it can be expected that, at the critical weight $u_C(v)$, the asymptotic enumeration has a correcting term $n^{-3/2}$ and the scaling limit is the CRT with distances rescaled by $n^{1/2}$ (same order of magnitude as in the supercritical case). To have a continuous range of asymptotic exponents, one could more generally consider random maps weighted by a Potts model, and additionally weighted at blocks (a method to derive the singular exponents of general maps weighted by a Potts model has  been developed in~\cite{borot2012loop}; see also~\cite{eynard1999potts,bernardi2011counting}).

Finally, one could also consider other kinds of block-decompositions in the context of decorated maps, such as $3$-oriented triangulations or $2$-oriented quadrangulations decomposed into irreducible components, having a weight $u$ per such component. The asymptotic exponents at $u=1$ are $n^{-5}$ and $n^{-4}$~\cite{felsner2010asymptotic}, respectively. This suggests that, as in the above mentioned model, at the critical weight $u_C$, the model exhibits a tree-behaviour: the asymptotic enumeration has polynomial correction $n^{-3/2}$ and rescaling the distances by $n^{1/2}$ gives convergence towards the CRT.

\bibliography{AofaBoisees}

\appendix


\begin{figure}[t]
\centering
\includegraphics[height=0.37\textheight]{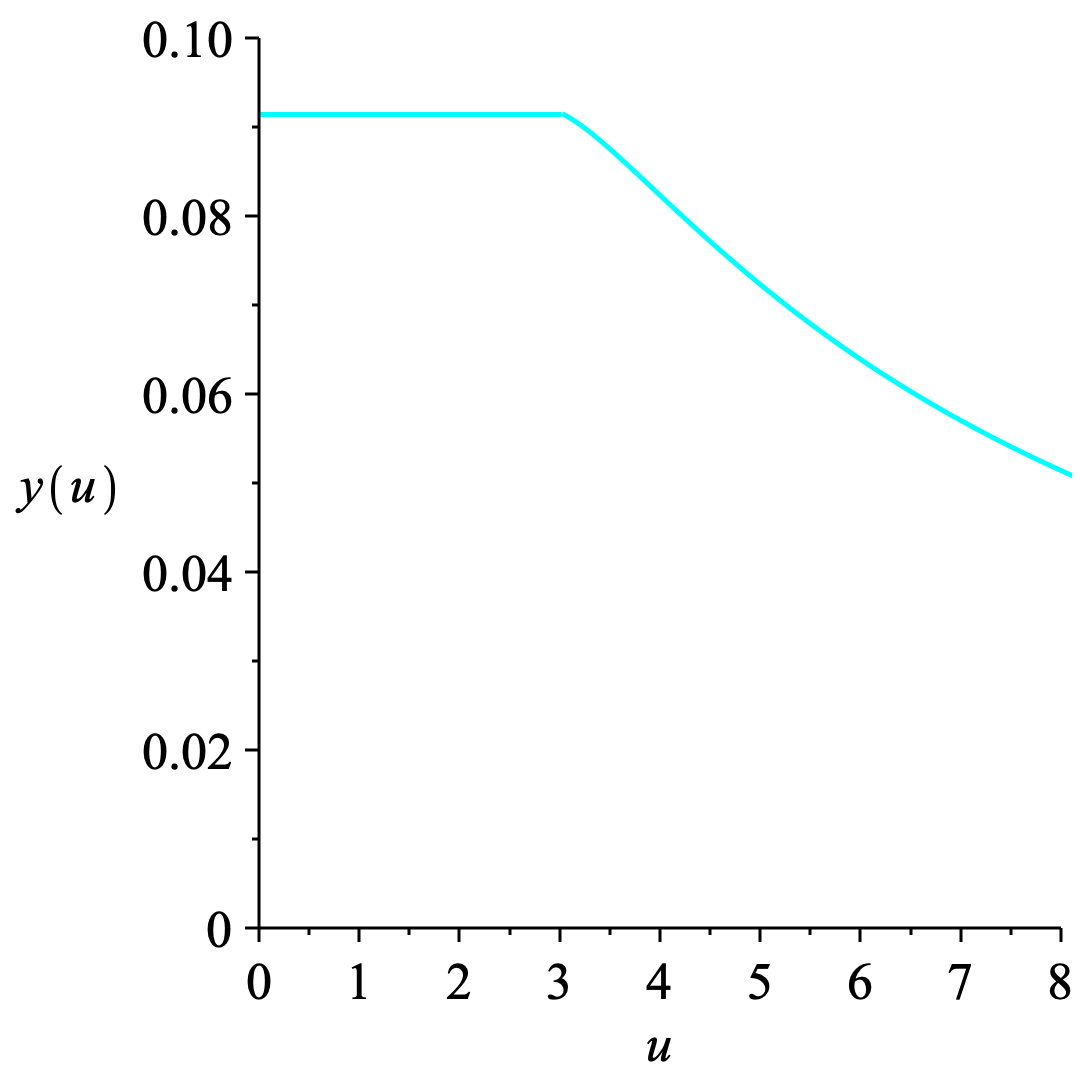}
\caption{Plot of $y(u)$~defined in \cref{prop:defUc}.
At $u_C$ the tangent from the right side is also horizontal.}
\label{fig:y-u}
\end{figure}

\end{document}